\def\XXint#1#2#3{{\setbox0=\hbox{$#1{#2#3}{\int}$ }
\vcenter{\hbox{$#2#3$ }}\kern-.6\wd0}}
\def\({\left(}
\def \){ \right)}
\newtheorem{theorem}{Theorem}[section]
\newtheorem{lemma}[theorem]{Lemma}
\newtheorem{proposition}[theorem]{Proposition}
\theoremstyle{definition}
\newtheorem{remark}[theorem]{Remark}
\renewcommand{\appendix}{\par
   \setcounter{section}{0}%
   \setcounter{subsection}{0}%
   \setcounter{subsubsection}{0}%
   \gdef\thesection{\@Alph\c@section}%
   \gdef\thesubsection{\@Alph\c@section.\@arabic\c@subsection}%
   \gdef\theHsection{\@Alph\c@section.}%
   \gdef\theHsubsection{\@Alph\c@section.\@arabic\c@subsection}%
   \csname appendixmore\endcsname
 }
\numberwithin{equation}{section}
\begin{document}

\arraycolsep=1pt

\title{\bf\Large $L_x^p\rightarrow L^q_{x,u}$ estimates for dilated averages over planar curves
\footnotetext{\hspace{-0.35cm} 2020 {\it
Mathematics Subject Classification}. Primary 42B10;
Secondary 42B15.
\endgraf {\it Key words and phrases.} generalised Radon transforms,  $L_x^p-L^q_{x,u}$ estimates, local smoothing estimates, Fourier integral operators.
\endgraf Junfeng Li was supported by Natural Science Foundation of China (No.~12071052) and the Fundamental Research Funds for the Central Universities. Naijia Liu was supported by China Postdoctoral Science Foundation (No.~2022M723673). Zengjian Lou was supported by Natural Science Foundation of China (No.~12071272). Haixia Yu was supported by Natural Science Foundation of China (No.~12201378), Guangdong Basic and Applied Basic Research Foundation (No.~2023A1515010635) and STU Scientific Research Foundation for Talents (No.~NTF21038).}}
\author{Junfeng Li, Naijia Liu, Zengjian Lou and Haixia Yu\footnote{Corresponding author.}}

\date{}

\maketitle

\vspace{-0.7cm}

\begin{abstract}
 In this paper, we consider the $L_x^p(\mathbb{R}^2)\rightarrow L_{x,u}^q(\mathbb{R}^2\times [1,2])$ estimate for the operator $T$ along a dilated plane curve $(ut,u\gamma(t))$, where
  $$Tf(x,u):=\int_{0}^{1}f(x_1-ut,x_2-u \gamma(t))\,\textrm{d}t,$$
 $x:=(x_1,x_2)$ and $\gamma$ is a general plane curve satisfying some suitable smoothness and curvature conditions. We show that $T$ is $L_x^p(\mathbb{R}^2)$ to $L_{x,u}^q(\mathbb{R}^2\times [1,2])$ bounded whenever $(\frac{1}{p},\frac{1}{q})\in \square \cup \{(0,0)\}\cup \{(\frac{2}{3},\frac{1}{3})\}$ and $1+(1 +\omega)(\frac{1}{q}-\frac{1}{p})>0$, where the trapezium $\square:=\{(\frac{1}{p},\frac{1}{q}):\ \frac{2}{p}-1\leq\frac{1}{q}\leq \frac{1}{p}, \frac{1}{q}>\frac{1}{3p}, \frac{1}{q}>\frac{1}{p}-\frac{1}{3}\}$ and $\omega:=\limsup_{t\rightarrow 0^{+}}\frac{\ln|\gamma(t)|}{\ln t}$. This result is sharp except for some borderline cases. On the other hand, in a smaller $(\frac{1}{p},\frac{1}{q})$ region, we also obtain the almost sharp estimate $T : L_x^p(\mathbb{R}^2)\rightarrow L_{x}^q(\mathbb{R}^2)$ uniformly for $u\in [1,2]$. These results imply that the operator $T$ has the so called local smoothing phenomenon, i.e., the $L^q$ integral about $u$ on $[1,2]$ extends the region of $(\frac{1}{p},\frac{1}{q})$ in uniform estimate $T : L_x^p(\mathbb{R}^2)\rightarrow L_{x}^q(\mathbb{R}^2)$.

\end{abstract}

\section{Introduction}

It is well known that the theory of maximal functions plays a central role in harmonic analysis and partial
differential equations. Which has attracted a lot of attention in the last decades. Our topic in this paper originated in the estimate for maximal function. Let us begin with a definition of the \emph{maximal function} $M_t$ along the straight line $(t,t)$,
\begin{align*}
M_tf(x):=\sup_{u\in (0,\infty)}\frac{1}{u}\left|\int_{0}^{u}f(x_1-t,x_2-t)\,\textrm{d}t\right|
\end{align*}
for appropriate functions $f$ on $\mathbb{R}^2$. Here and hereafter, $x:=(x_1,x_2)$. From Stein \cite{Stein}, it follows that $M_t$ is bounded on $L^p(\mathbb{R}^2)$ for all $p\in(1,\infty]$ because $M_t$ is essentially the Hardy-Littlewood maximal function. By a straightforward change of
variables, we can rewrite
\begin{align*}
M_tf(x)=\sup_{u\in (0,\infty)}\left|\int_{0}^{1}f(x_1-ut,x_2-u t)\,\textrm{d}t\right|.
\end{align*}
It is natural to replace the straight line $(t,t)$ in $M_t$ by a general plane curve $(t,\gamma(t))$, such as the parabola $(t,t^2)$, where $\gamma$ satisfies some suitable smoothness and curvature conditions. We now consider the \emph{maximal function} $M_\gamma$ along a curve $(t,\gamma(t))$,
\begin{align*}
M_\gamma f(x):=\sup_{u\in (0,\infty)}\left|\int_{0}^{1}f(x_1-ut,x_2-u \gamma(t))\,\textrm{d}t\right|.
\end{align*}

For notational convenience, we define the operator $T$ along a dilated plane curve $(ut,u\gamma(t))$ as
\begin{align}\label{eq:1.0}
Tf(x,u):=\int_{0}^{1}f(x_1-ut,x_2-u \gamma(t))\,\textrm{d}t,
\end{align}
then
\begin{align*}
M_\gamma f(x)=\sup_{u\in (0,\infty)}\left|Tf(x,u)\right|.
\end{align*}
It is natural to ask whether the $L_x^p(\mathbb{R}^2)\rightarrow L_{x}^q(\mathbb{R}^2)$ estimate for $M_\gamma$ is valid or not. However, by Liu and Yu \cite[Remark 1.3]{LiuYu}, the maximal function $M_\gamma$ does not map $L_x^p(\mathbb{R}^2)$ to $L_{x}^q(\mathbb{R}^2)$ (even when $\sup_{u\in (0,\infty)}$ in $M_\gamma$ is replaced by $\sup_{u\in (0,1)}$), unless $p=q$. Related works can be found in \cite{Bour86, SSte, IKM, KLO} and references therein. However, if we take supremum over $u\in [1,2]$ in $M_\gamma$, i.e.,
\begin{align*}
\bar{M}_\gamma f(x):=\sup_{u\in [1,2]}\left|Tf(x,u)\right|.
\end{align*}
Then $\bar{M}_\gamma$ is actually bounded from $L_x^p(\mathbb{R}^2)$ to $L_{x}^q(\mathbb{R}^2)$ for some $q>p$, see \cite{LiuYu}. Indeed, this result can be rewritten as
\begin{align}\label{eq:1.1}
\left\|Tf\right\|_{L_x^{q}L_u^{\infty}(\mathbb{R}^{2}\times [1,2])} \lesssim \|f\|_{L^{p}(\mathbb{R}^{2})}.
\end{align}
This phenomenon is called $L^p(\mathbb{R}^2)$-improving. We refer the reader to \cite{TW, Sch, LeeL, LWZ} for the details of this phenomenon.

Our goal in this paper is to establish the $L_x^p(\mathbb{R}^2)\rightarrow L_{x,u}^q(\mathbb{R}^2\times [1,2])$ estimate for $T$. By $\textrm{H}\ddot{\textrm{o}}\textrm{lder}$'s inequality, it is easy to see that, when $(\frac{1}{p}, \frac{1}{q})$ verifies the inequality \eqref{eq:1.1}, we have the $L_x^p(\mathbb{R}^2)\rightarrow L_{x,u}^q(\mathbb{R}^2\times [1,2])$ estimate for $T$. However, the $(\frac{1}{p}, \frac{1}{q})$ region for $T:\ L_x^p(\mathbb{R}^2)\rightarrow L_{x,u}^q(\mathbb{R}^2\times [1,2])$ becomes large. We now state our main results.

\begin{theorem}\label{thm1}
Assume $\gamma\in C^{N}(0,1]$ with $N\in\mathbb{N}$ large enough, $\lim_{t\rightarrow 0^+}\gamma(t)=0$ and $\gamma$ is monotonic on $(0,1]$. Moreover, $\gamma$ satisfies the following two conditions:
\begin{enumerate}\label{curve gamma}
  \item[\rm(i)] there exist positive constants $\{C^{(j)}_{1}\}_{j=1}^{2}$ such that $|\frac{t^{j}\gamma^{(j)}(t)}{\gamma(t)}|\geq C^{(j)}_{1}$  for any $t\in (0,1]$;
  \item[\rm(ii)] there exist positive constants $\{C^{(j)}_{2}\}_{j=1}^{N}$ such that $|\frac{t^{j}\gamma^{(j)}(t)}{\gamma(t)}|\leq C^{(j)}_{2}$ for any $t\in (0,1]$.
\end{enumerate}
Then, there exists a positive constant $C$ such that for all $f\in L_x^{p}(\mathbb{R}^{2})$,
\begin{align*}
\left\|Tf\right\|_{L_{x,u}^q(\mathbb{R}^2\times [1,2])} \leq C \|f\|_{L_x^{p}(\mathbb{R}^{2})},
\end{align*}
if $(\frac{1}{p},\frac{1}{q})\in \square \cup \{(0,0)\}\cup \{(\frac{2}{3},\frac{1}{3})\}$ and $1+(1 +\omega)(\frac{1}{q}-\frac{1}{p})>0$. Here and hereafter, the trapezium $\square:=\{(\frac{1}{p},\frac{1}{q}):\ \frac{2}{p}-1\leq\frac{1}{q}\leq \frac{1}{p}, \frac{1}{q}>\frac{1}{3p}, \frac{1}{q}>\frac{1}{p}-\frac{1}{3}\}$ and $\omega:=\limsup_{t\rightarrow 0^{+}}\frac{\ln|\gamma(t)|}{\ln t}$.
\end{theorem}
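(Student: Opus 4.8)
The plan is to decompose the curve dyadically near the origin (where the curvature degenerates) and to treat each piece by rescaling it to a normalised curve on a unit-length interval, reducing matters to a model operator with nondegenerate curvature to which Fourier-integral-operator technology applies. Concretely, write $T = \sum_{k\geq 0} T_k$, where $T_k f(x,u) = \int_{2^{-k-1}}^{2^{-k}} f(x_1-ut, x_2-u\gamma(t))\,\mathrm{d}t$. On the $k$-th piece, the substitution $t = 2^{-k}s$ with $s\in[1/2,1]$ and the anisotropic scaling $x_1 = 2^{-k}y_1$, $x_2 = |\gamma(2^{-k})|y_2$ turns $T_k$ into $2^{-k}$ times an average $\int_{1/2}^{1} g(y_1-us, y_2-u\Gamma_k(s))\,\mathrm{d}s$ along the rescaled curve $\Gamma_k(s):=\gamma(2^{-k}s)/\gamma(2^{-k})$. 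Conditions (i) and (ii) are precisely what guarantees that $\Gamma_k$ and its first $N$ derivatives are bounded above and below on $[1/2,1]$, uniformly in $k$ — in particular $|\Gamma_k''|\sim 1$, so each rescaled operator is a nondegenerate averaging operator of the type handled by local smoothing / $L^p$-improving estimates for generalised Radon transforms (e.g. the circular-average fixed-time estimates together with the Mockenhaupt–Seeger–Sogge / Pramanik–Seeger local smoothing machinery, and the sharp $L^p$-improving results of Tao–Wright and Littman). The key point is to keep track of how the Jacobian factors $2^{-k}$ and $|\gamma(2^{-k})|$ transform the $L^p_x\to L^q_{x,u}$ norms, producing a geometric factor $2^{-k}\bigl(2^{-k}|\gamma(2^{-k})|\bigr)^{1/q-1/p}$ in front of the normalised bound for $T_k$.

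The main steps, in order, are as follows. \emph{Step 1:} establish the uniform estimate for the model operator $S_k g(y,u) = \int_{1/2}^1 g(y_1-us,y_2-u\Gamma_k(s))\,\mathrm{d}s$ from $L^p_y(\mathbb{R}^2)$ to $L^q_{y,u}(\mathbb{R}^2\times[1,2])$ for every $(\frac1p,\frac1q)$ in the claimed region, with a constant independent of $k$; here one uses that the cinematic curvature is nonvanishing so the associated FIO satisfies the sharp $L^p$ local smoothing bounds, giving the strip $\{\frac1q>\frac1{3p},\ \frac1q>\frac1p-\frac13\}$, interpolated against the trivial $L^1\to L^1$, the $L^\infty\to L^\infty$, the $L^2$ smoothing, and the endpoint $(\frac23,\frac13)$ (which is the $L^{3/2}\to L^3$ Tao–Wright point / the Drury-type estimate). \emph{Step 2:} undo the scaling to get $\|T_k f\|_{L^q_{x,u}} \lesssim 2^{-k}\,2^{(1/q-1/p)k}\,|\gamma(2^{-k})|^{1/q-1/p}\,\|f\|_{L^p_x}$. \emph{Step 3:} sum in $k$: using $\omega=\limsup_{t\to0^+}\ln|\gamma(t)|/\ln t$, for any $\varepsilon>0$ one has $|\gamma(2^{-k})|\le 2^{-k(\omega-\varepsilon)}$ eventually when $1/q\ge 1/p$ (and $\ge 2^{-k(\omega+\varepsilon)}$ when $1/q<1/p$, where the exponent $1/q-1/p$ is negative), so the series is dominated by $\sum_k 2^{-k[1+(1+\omega)(1/q-1/p)]}$ up to $\varepsilon$-losses, which converges exactly under the hypothesis $1+(1+\omega)(1/q-1/p)>0$. \emph{Step 4:} handle the remaining part of the curve $t\in[c,1]$ away from the origin — there the curve is uniformly nondegenerate with no scaling needed, so it is covered directly by Step 1 with $k$ bounded. \emph{Step 5:} $\ell^1$-triangle-inequality assembly, plus an interpolation/analytic-interpolation argument to pass from the individual vertices and edges of the region to the full set $\square\cup\{(0,0)\}\cup\{(\tfrac23,\tfrac13)\}$.

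The $\varepsilon$-loss from the $\limsup$ definition of $\omega$ is harmless on the open region $\{1+(1+\omega)(1/q-1/p)>0\}$ but must be watched at the boundary; since the theorem only claims the open condition, this causes no trouble. The subtle sign issue is that $1/q-1/p$ changes sign across the diagonal $p=q$, so the direction of the inequality $|\gamma(2^{-k})|\lessgtr 2^{-k(\omega\mp\varepsilon)}$ flips, and one must check that in both cases the summation exponent is correctly $1+(1+\omega)(1/q-1/p)$; the monotonicity of $\gamma$ and condition (i) (which forces $|\gamma(t)|$ to behave polynomially, with exponent pinned down by $\omega$, and in particular $\omega\ge 1$) make this rigorous.

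The hard part will be Step 1: proving the \emph{uniform-in-$k$} $L^p_y\to L^q_{y,u}$ bound for the model operators $S_k$ across the entire region, especially near the delicate boundary lines $\frac1q=\frac1{3p}$ and $\frac1q=\frac1p-\frac13$ and at the endpoint $(\frac23,\frac13)$. This requires an FIO formulation of $S_k$, verification that the phase functions and their curvature quantities (the "cinematic curvature" of the space curve $s\mapsto(s,\Gamma_k(s))$ together with the dilation variable $u$) are bounded above and below uniformly in $k$ — which is exactly the content of hypotheses (i)–(ii) applied to all derivatives up to order $N$ — and then invoking the sharp local smoothing estimates (Guth–Wang–Zhang in the plane, or the earlier Mockenhaupt–Seeger–Sogge range that already suffices for the stated region) with constants depending only on these uniform bounds. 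Getting the scaling bookkeeping in Step 2 exactly right, so that the exponent of $2^{-k}$ matches $1+(1+\omega)(1/q-1/p)$ after Step 3, is the other place where care is needed.
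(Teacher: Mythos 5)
Your proposal follows essentially the same route as the paper: dyadic decomposition in $t$ near the origin, anisotropic rescaling to the normalised curves $\Gamma_j(t)=\gamma(2^jt)/\gamma(2^j)$ whose derivatives are uniformly controlled by hypotheses (i)--(ii), a uniform-in-$j$ FIO/local-smoothing estimate on the rescaled piece, and summation of the geometric factors $2^j(2^j|\gamma(2^j)|)^{1/q-1/p}$ via the $\omega$-condition. One caveat: the crucial $L^4$ estimate on the segment toward $C(\tfrac12,\tfrac16)$ genuinely requires the sharp variable-coefficient local smoothing of Gao--Liu--Miao--Xi (the FIO extension of Guth--Wang--Zhang), not merely the older Mockenhaupt--Seeger--Sogge range as you suggest might suffice.
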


We show the conditions of $(\frac{1}{p}, \frac{1}{q})$ in Theorem \ref{thm1} are necessary, which means that Theorem \ref{thm1} is sharp except for some borderline cases. Moreover, we can also find that the region of $(\frac{1}{p}, \frac{1}{q})$ in Theorem \ref{thm1} remains unchanged when $\omega\in (0,2)$, but decreases as $\omega$ increases when $\omega\in [2,\infty)$. Since the curvature of a curve is key in the proofs, which may provide an underlying cause of this phenomenon. At the same time, for the infinitely flat curve\footnote{A curve $\gamma$ is called a infinitely flat curve if it satisfies $\gamma^{(k)}(0)=0$ for any $k\in \mathbb{N}_0$, such as $\gamma(t):=e^{-1/t}$ or $e^{-1/t^2}$. These curves become very close to being a ``line"  at the origin.}, it is reasonable to conjecture that the estimate $T:\ L_x^p(\mathbb{R}^2)\rightarrow L_{x,u}^q(\mathbb{R}^2\times [1,2])$ holds only for $p=q\in [1,\infty]$.

\begin{theorem}\label{thm2}
Let $\gamma$ be defined as in Theorem \ref{thm1}. Then, there exists a positive constant $C$ such that for all $f\in L_x^{p}(\mathbb{R}^{2})$, the estimate
\begin{align*}
\left\|Tf\right\|_{L_{x,u}^q(\mathbb{R}^2\times [1,2])} \leq C \|f\|_{L_x^{p}(\mathbb{R}^{2})},
\end{align*}
holds, only if the following conditions are satisfied:
\begin{enumerate}\label{necessary}
  \item[\rm(i)] $(\frac{1}{p},\frac{1}{q})$ satisfy $1+(1 +\omega)(\frac{1}{q}-\frac{1}{p})\geq0$;
  \item[\rm(ii)] $(\frac{1}{p},\frac{1}{q})$ satisfy $\frac{2}{p}-1\leq\frac{1}{q}\leq \frac{1}{p}$;
  \item[\rm(iii)] $(\frac{1}{p},\frac{1}{q})$ satisfy $\frac{1}{q}\geq\frac{1}{3p}$;
  \item[\rm(iv)] $(\frac{1}{p},\frac{1}{q})$ satisfy $\frac{1}{q}\geq\frac{1}{p}-\frac{1}{3}$.
\end{enumerate}
 \end{theorem}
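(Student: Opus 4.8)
The plan is to establish the four necessary conditions by testing $T$ against carefully chosen test functions $f$ and exploiting the geometry of the dilated curve. The key observation throughout is that for $u\in[1,2]$ the dilated curve $(ut,u\gamma(t))$, $t\in(0,1]$, sweeps out a two-dimensional region whose shape is controlled near the origin by $\omega$: roughly, at scale $t\sim\delta$ the curve has $|u\gamma(t)|\sim\delta^{\omega}$, and by condition (i) of Theorem~\ref{thm1} the curve has genuine curvature comparable to $|\gamma(t)|/t^2$, so an arc of parameter-length $\delta$ fits in a rectangle of dimensions $\delta\times\delta^{2}|\gamma(\delta)|/\delta^{2}=\delta\times|\gamma(\delta)|$ with the curvature filling out the $|\gamma(\delta)|$-direction to order $|\gamma(\delta)|$.

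For condition (i), I would take $f=\chi_{E}$ where $E$ is the $\delta$-neighbourhood of the arc $\{(s,\gamma(s)):s\in(0,\delta)\}$, so that $|E|\sim\delta\cdot|\gamma(\delta)|\sim\delta^{1+\omega}$ (using the definition of $\omega$ as a $\limsup$, taking a sequence $\delta\to0^+$ along which $|\gamma(\delta)|\approx\delta^{\omega}$). For $(x,u)$ with $x$ in a translate of $E$ of comparable measure and $u\in[1,2]$, one checks $Tf(x,u)\gtrsim\delta$ because a full $O(\delta)$-proportion of the $t$-integral lands in $E$; hence $\|Tf\|_{L^q_{x,u}}\gtrsim\delta\cdot(\delta^{1+\omega})^{1/q}$ while $\|f\|_{L^p}\sim(\delta^{1+\omega})^{1/p}$. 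Letting $\delta\to0^+$ forces $\delta^{1+(1+\omega)/q}\lesssim\delta^{(1+\omega)/p}$, i.e. $1+(1+\omega)(\tfrac1q-\tfrac1p)\ge0$, which is exactly (i). Conditions (ii), (iii), (iv) are the ``classical'' Radon-transform obstructions and should be reproduced by the standard examples: taking $f$ the indicator of a small ball of radius $\delta$ gives (via $Tf(x,u)\gtrsim\delta$ on a set of measure $\sim\delta$ in $x$, uniformly in $u$) the bound $\tfrac1q\ge\tfrac1p-\tfrac13$, condition (iv); dualizing (i.e. using the adjoint, or testing with $f$ the indicator of a thin rectangle around a single curve $\{(ut,u\gamma(t)):t\in(0,1)\}$ for fixed $u$, which has measure $\sim\delta$) gives $\tfrac1q\le\tfrac1p$ and, by a Knapp-type example adapted to the curvature, $\tfrac1q\ge\tfrac13\cdot\tfrac1p$, condition (iii), while the interchange of the two roles yields $\tfrac2p-1\le\tfrac1q$, the remaining half of (ii).

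Concretely for (iii) and (iv) the Knapp example is the $\delta$-neighbourhood of the whole curve $\{(t,\gamma(t)):t\in(\tfrac12,1)\}$ (a $\delta$-tube, measure $\sim\delta$): then $Tf(x,u)\gtrsim1$ whenever $x$ lies in the $O(\delta^{3/2})$-thickened arc in the direction the curvature provides, which after accounting for the $u$-averaging has measure $\sim\delta^{3/2}$ in $\mathbb{R}^2$ and $\sim\delta^{3/2}$ in $\mathbb{R}^2\times[1,2]$, giving $\delta^{3/(2q)}\lesssim\delta^{1/p}$ and hence $\tfrac1q\ge\tfrac{3}{2}\cdot$ (correcting constants) $\tfrac{2}{3}\tfrac1p$; a second scaling in which the ball is taken at the flat end reproduces (iv), and the two together with the trivial $L^1\to L^1$, $L^\infty\to L^\infty$ endpoints give (ii) by interpolation-type consistency. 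The main obstacle I anticipate is getting the exponents in the curvature-adapted examples exactly right: one must verify that, after integrating in $u\in[1,2]$, the region where $|Tf|$ is large does not shrink (the dilation parameter must not be allowed to ``move'' the thin sets apart faster than their own thickness), and one must check that the $\limsup$ definition of $\omega$ is harmlessly converted into an honest lower bound $|\gamma(\delta_k)|\gtrsim\delta_k^{\omega+\varepsilon}$ along a suitable sequence $\delta_k\to0^+$ for every $\varepsilon>0$, which is where conditions (i) and (ii) of Theorem~\ref{thm1} on $\gamma$ enter to control the oscillation of $|\gamma|$ between dyadic scales.
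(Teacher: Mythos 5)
Your general strategy (test $T$ against indicators of sets adapted to the curve's geometry) is the right one, and your idea for (i) is essentially the paper's: the paper uses $f=\chi_{S_{t_i}}$ with $S_{t_i}=[-t_i,t_i]\times[-|\gamma(t_i)|,|\gamma(t_i)|]$ along a sequence $t_i\to 0$ chosen so that $\frac{\ln|\gamma(t_i)|}{\ln t_i}\in(\omega-\epsilon,\omega+\epsilon)$, together with the doubling condition on $\gamma$ to get the pointwise lower bound $T\chi_{S_{t_i}}\gtrsim t_i$ on $S_{t_i}/2$. (Your ``$\delta$-neighbourhood of the arc'' is not this set when $\omega>1$; you need the \emph{anisotropic} box of dimensions $\delta\times|\gamma(\delta)|$, not an isotropic $\delta$-thickening.)

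The real gap is in (ii)--(iv), where the constructions and exponent bookkeeping are scrambled. The small-ball example $f=\chi_{B(0,\delta)}$ gives $\delta^{1+1/q}\lesssim\delta^{2/p}$, hence $\frac{1}{q}\ge\frac{2}{p}-1$, which is part of \emph{(ii)}, not (iv) as you claim. Your Knapp computation for (iii), $\delta^{3/(2q)}\lesssim\delta^{1/p}$, would give $\frac{1}{q}\ge\frac{2}{3p}$, which is strictly stronger than (iii) and would contradict the positive result of Theorem~\ref{thm1} (the half-open segment $(C,M]$ with $C=(\tfrac12,\tfrac16)$ lies on the line $\frac{1}{q}=\frac{1}{3p}$); the paper instead proves (iii) by passing to the adjoint $T^*g(x)=\int_1^2\int_0^1 g(x_1+ut,x_2+u\gamma(t),u)\,dt\,du$ and testing with $\chi_W$ for $W=B(0,\delta\epsilon)\times[1,1+\epsilon]$, so that the $u$-thinness of $W$ produces $\|\chi_W\|_{L^{q'}_{x,u}}\approx\epsilon^{3/q'}$ and yields exactly $\frac{1}{q}\ge\frac{1}{3p}$. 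Condition (iv) is the hardest and is not really present in your sketch: the paper discretizes $u\in[1,2]$ into $\sim\epsilon^{-2}$ points $u_i$, builds disjoint receptor sets $\Omega_{u_i}$ of measure $\sim\epsilon^3$ around the points $(u_i,u_i)$, and crucially uses the transversality $\gamma'(\tau_0)\neq\gamma(\tau_0)$ (which follows from condition (i) on $\gamma$) plus a second-order Taylor expansion of $\gamma$ at $\tau_0$ to show these sets are pairwise disjoint and that $T\chi_Q\gtrsim\epsilon$ on each. Without this $u$-discretization and the disjointness driven by the transversality of the dilation flow, you cannot extract the extra factor needed to land on $\frac{1}{q}\ge\frac{1}{p}-\frac{1}{3}$ rather than the weaker (ii).
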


For these results, we would like to give the following some remarks.

\begin{remark}\label{remark 1}
Here are some examples of curves satisfying the conditions (i) and (ii) of Theorem \ref{thm1}. We may add a characteristic function $\chi_{(0,\varepsilon_0]}(t)$ to these curves if necessary, where $\varepsilon_0$ is small enough.
\begin{enumerate}
\item[\rm(1)] $\gamma_1(t):=t^d$, where $d\in(0,\infty)$ and $d\neq1$;
\item[\rm(2)] $\gamma_2(t):=t^{d-1}\ln(1+t)$, where $d\in(1,\infty)$;
\item[\rm(3)] $\gamma_3(t):=a_dt^d+a_{d+1}t^{d+1}+\cdots+a_{d+m}t^{d+m}$, where $d\geq 2$, $d\in\mathbb{N}$ and $m\in\mathbb{N}_0$ , $a_d\neq 0$, i.e., $\gamma_3$ is a polynomial of degree at least $d$ with no linear term and constant term;
\item[\rm(4)] $\gamma_4(t):=\sum_{i=1}^{d}\beta_i t^{\alpha_i}$, where $\alpha_i\in(0,\infty)$ for all $i=1,2, \cdots, d$, $\min_{i\in\{1,2, \cdots, d\}}\{\alpha_i\}_{i=1}^{d}\neq 1$ and $d\in \mathbb{N}$;
\item[\rm(5)] $\gamma_5(t):=1-\sqrt{1-t^2}$, or $t\sin t$, or $t-\sin t$, or $1-\cos t$, or $e^t-t-1$;
\item[\rm(6)] $\gamma_6(t)$ is a smooth function on $[0,1]$ satisfying $\gamma(0)=\gamma'(0)=\cdots=\gamma^{(d-1)}(0)=0$ and $\gamma^{(d)}(0)\neq0$, where $d\geq 2$ and $d\in\mathbb{N}$. Note that $\gamma_6$ is finite type $d$ at the origin (see, Iosevich \cite{Iose}). $\gamma_3$ and $\gamma_5$ provide some special cases of $\gamma_6$.
\end{enumerate}
\end{remark}

\begin{remark}\label{remark 2}
If $\gamma(t):=\gamma_1(t)=t^d$ with $d\in(0,\infty)$ and $d\neq1$, it follows that $1+(1 +\omega)(\frac{1}{q}-\frac{1}{p})>0$ is equivalent to
$(\frac{1}{p},\frac{1}{q})\in\{(\frac{1}{p},\frac{1}{q}):\ \frac{1}{q}>\frac{1}{p}- \frac{1}{d+1}\}$. As a result, we can express the region of $(\frac{1}{p}, \frac{1}{q})$ in Theorem \ref{thm1} as in Figure \ref{Figure:1}. On the other hand, by a simple calculation, $\gamma_2$, $\gamma_3$ and $\gamma_6$ share the same region with $\gamma_1$.
\begin{figure}[htbp]
  \centering
  \includegraphics[width=3.8in]{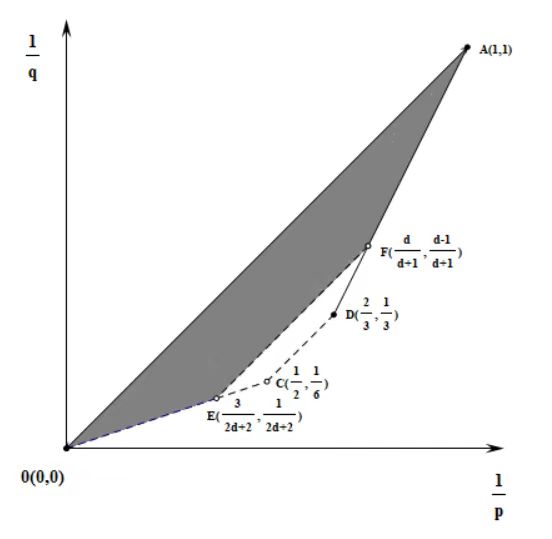}\\
  \caption{The region of $(\frac{1}{p}, \frac{1}{q})$ in Theorem \ref{thm1} for $\gamma=\gamma_1$, $\gamma_2$, $\gamma_3$ or $\gamma_6$.}\label{Figure:1}
\end{figure}
\end{remark}

In \cite{H}, Hickman obtained the almost sharp (up to an endpoint) $L_x^p(\mathbb{R}^n)\rightarrow L_{x,u}^q(\mathbb{R}^n\times [1,2])$ estimate for the $n$-dimensional version of $T$ along the moment curve $(t,t^2,\cdots,t^n)$ by establishing a weak-type endpoint estimate. Our main results in Theorems \ref{thm1} and \ref{thm2} are the two-dimensional case, but with a general plane curve including the parabola $\gamma(t):=t^2$. However, it is worth pointing out that our proofs are completely different. In particular, we establish a fixed time estimate and a local smoothing estimate for a general Fourier integral operator, based on the works of Seeger, Sogge and Stein \cite{SSS}, Stein \cite[Chapter IX]{Stein} and, Gao, Liu, Miao and Xi \cite{GLMX}. We remark that \cite{GLMX} can be viewed as a generalization of the local smoothing conjecture in $2+1$ dimensions obtained in Guth, Wang and Zhang \cite{GWZ}. Furthermore, as stated above, the region of $(\frac{1}{p}, \frac{1}{q})$ in estimate $T:\ L_x^p(\mathbb{R}^2)\rightarrow L_{x,u}^q(\mathbb{R}^2\times [1,2])$ depends on the value $\omega=\limsup_{t\rightarrow 0^{+}}\frac{\ln|\gamma(t)|}{\ln t}$, this is a new situation, to the best of our knowledge. At the same time, we also show that $T$ has the local smoothing phenomenon by obtaining the almost sharp estimate $T : L_x^p(\mathbb{R}^2)\rightarrow L_{x}^q(\mathbb{R}^2)$ uniformly for $u\in [1,2]$ in Theorem \ref{thm3}.

We should point out some historical backgrounds about the study of the operator $T$. When $\gamma(t):=t^2$, based on the method of our proofs, we can not show the $L_x^2(\mathbb{R}^2)\rightarrow L_{x,u}^6(\mathbb{R}^2\times [1,2])$ boundedness of $T$. However, the estimate $T:\ L_x^2(\mathbb{R}^2)\rightarrow L_{x,u}^6(\mathbb{R}^2\times [1,2])$ is already known to hold essentially by Strichartz \cite{Str}, Schlag and Sogge \cite{SchS}. We also remark that this estimate for dilated averages over circles is equivalent to the famous Stein-Tomas Fourier restriction theorem for a conic surface. On the other hand, for other rangers of $(\frac{1}{p}, \frac{1}{q})$, the estimates $T:\ L_x^p(\mathbb{R}^2)\rightarrow L_{x,u}^q(\mathbb{R}^2\times [1,2])$ about $(t,t^2)$ obtained in Gressman \cite{Gr1, Gr2} by using the combinatorial argument, which can be traced back to Christ \cite{Chri}.

The motivation for our curved variant also comes from the \emph{maximal function} $\mathcal{M}$ defined by taking averages over curve $(t,\gamma(t))$,
\begin{align*}
\mathcal{M}f(x):=\sup_{\epsilon\in(0,\infty)}
\frac{1}{2\epsilon}\int_{-\epsilon}^{\epsilon}\left|f(x_1-t,x_2-\gamma(t))\right|
\,\textrm{d}t,
\end{align*}
which is also been extensively studied. For $\gamma(t):=t^2$, Nagel, Riviere and Wainger \cite{NRW76} established the $L_x^p(\mathbb{R}^2)$ boundedness of $\mathcal{M}$ for any $p\in(1,\infty]$. Stein \cite{Ste1} obtained this boundedness for homogeneous curves. Stein and Wainger \cite{SW1} studied some smooth curves. Later it was extended to more general families of curves; see, for example, \cite{SW,CCVWW,CVWW}.

Let us now give an overview of the proofs of Theorems \ref{thm1} and \ref{thm2}. By a non-isotropic dilation, the proof is reduced to an uniform estimate for the averages operator $\widetilde{T}_{j}$ along a dilated plane curve $(ut, u\Gamma_{j}(t))$, where $\Gamma_{j}(t):=\frac{\gamma(2^jt)}{\gamma(2^j)}$ behaves uniformly in the parameter $j$.
Specifically, the second derivative of $\Gamma_{j}(t)$ is bounded below uniformly in $j$. This non-isotropic dilation seems necessary, which yields the slightly different
notation $\omega=\limsup_{t\rightarrow 0^{+}}\frac{\ln|\gamma(t)|}{\ln t}$. For the estimate of $\widetilde{T}_{j}$, by interpolation, it suffices to obtain an endpoint estimate at $D(\frac{2}{3},\frac{1}{3})$ and an estimate at the half open line $(C, M]$ in Figure \ref{Figure:3}. For the
$D(\frac{2}{3},\frac{1}{3})$ case, the corresponding estimate can be established by Lemmas \ref{lemma 2.1} and \ref{lemma 2.2}. For the half open line $(C, M]$ case, by using the theory of oscillatory integrals and stationary phase estimates, it is enough to obtain a local smoothing estimate for
\begin{align*}
F_{j,k}f(x,u):=\int_{\mathbb{R}^{2}} e^{i\Psi_j(x,u,\xi)}a_{j,k}(x,u,\xi) \hat{f}(\xi)\,\textrm{d}\xi,
\end{align*}
which is defined in \eqref{eq:3.5}. The operator $F_{j,k}$ is related to a class of Fourier integral operators studied in many papers (see, for instance, \cite{Ho71, DH72, Sog, B}). These Fourier integral operators are originated from the study of pseudo-differential operators or half-wave propagator. Furthermore, $F_{j,k}$ is also essentially a Fourier integral operator with phase function $x\cdot\xi-u|\xi|$, and the local smoothing estimate for $F_{j,k}$ will be reduced to a decoupling inequality for cones due to Bourgain and Demeter \cite{BoD}. Here, for a general $n$ dimension Fourier integral operator in \eqref{eq:3.a} satisfying the cinematic curvature condition, we establish a fixed time estimate about $P_kf$ in Proposition \ref{prop 3.1}. When $n=2$, we also obtain a corresponding local smoothing estimate in Proposition \ref{prop 3.2} based on the work of Gao, Liu, Miao and Xi \cite{GLMX}. Our local smoothing estimates for $F_{j,k}$ rely on Propositions \ref{prop 3.1} and \ref{prop 3.2}. It is worth noticing that to verify the so called cinematic curvature condition we need some complicated calculations. In order to obtain the necessity of the region of $(\frac{1}{p}, \frac{1}{q})$, we construct four examples. In particular, the proofs of (i) and (iv) of Theorem \ref{thm2} are a bit tricky.

This paper is structured as follows:
\begin{enumerate}
 \item[$\diamond$] In Section 2, we show the almost sharp estimate $T : L_x^p(\mathbb{R}^2)\rightarrow L_{x}^q(\mathbb{R}^2)$ uniformly for $u\in [1,2]$ in a smaller $(\frac{1}{p},\frac{1}{q})$ region comparing to that in Theorem \ref{thm1}. Furthermore, by the $L^q$ integral about $u$ on $[1,2]$, Theorem \ref{thm3} implies an important estimate at endpoint $D(\frac{2}{3},\frac{1}{3})$. It also implies that the operator $T$ has the so called local smoothing phenomenon.
 \item[$\diamond$] In Section 3, we give a proof of Theorem \ref{thm1}. An estimate at the half open line $(C, M]$ in Figure \ref{Figure:3} plays a key role, whose proof relies  heavily on the local smoothing estimates.
 \item[$\diamond$] In Section 4, we consider the necessary conditions for Theorem \ref{thm1}, i.e., Theorem \ref{thm2}. We will see that the estimate $T : L_x^p(\mathbb{R}^2)\rightarrow L_{x,u}^q(\mathbb{R}^2\times [1,2])$ in Theorem \ref{thm1} is almost sharp except for some endpoints.
\end{enumerate}

Finally, we make some convention on the notations. Throughout this paper, the letter ``$C$"  will denote a \emph{positive constant}, independent of the essential variables, but whose value may vary from line to line. Moreover, we use $C_{(u)}$ to denote a \emph{positive constant} depending on the indicated parameter $u$. The notation $a\lesssim b$ (or $a\gtrsim b$) will used to indicate the existence of a finite positive constant $C$ such that $a\leq Cb$ (or $a\geq Cb$). $a\approx b$ means $a\lesssim b$ and $b\lesssim a$. For any $x\in\mathbb{R}^n$ and $r\in(0,\infty)$, let $B(x,r):=\{y\in\mathbb{R}^n:\ |x-y|<r\}$ and $B^{\complement}(x,r)$
be its \emph{complement} in $\mathbb{R}^n$. $\hat{f}$ and $f^{\vee}$ shall denote the \emph{Fourier transform} and the \emph{inverse Fourier transform} of $f$, respectively. For $1<q\leq\infty$, we will denote $q'$ the \emph{adjoint number} of $q$, i.e., ${1}/{q}+ {1}/{q'}=1$. Let $\mathbb{Z}_0^{-}:=\{0,\,-1,\,-2,...\}$, $\mathbb{N}:=\{1,\,2,...\}$, $\mathbb{N}_0:=\{0,\,1,\,2,...\}$ and $\mathbb{R}^{+}:=(0,\infty)$. For any set $E$, we use $\chi_E$ to denote the \emph{characteristic function} of $E$.

\section{The local smoothing phenomenon}

Recall that
\begin{align*}
Tf(x,u)=\int_{0}^{1}f(x_1-ut,x_2-u \gamma(t))\,\textrm{d}t
\end{align*}
with $u\in [1,2]$, which is an operator of convolution type. When $u=1$, $Tf(\cdot,1)$ is a classical operator in harmonic analysis. For some related results, we refer to \cite{Lit, Ob1, Ob2, Sto1, Sto2} and the references therein. In this section, our main goal is to determine the almost optimal range of exponents $(\frac{1}{p},\frac{1}{q})$ such that the $L_x^p(\mathbb{R}^2)\rightarrow L_{x}^q(\mathbb{R}^2)$ estimate for $T$ holds uniformly for $u\in [1,2]$. We now state our result.

\begin{theorem}\label{thm3}
Assume $\gamma\in C^{2}(0,1]$, $\lim_{t\rightarrow 0^+}\gamma(t)=0$ and $\gamma$ is monotonic on $(0,1]$. Moreover, $\gamma$ satisfies the following two conditions:
\begin{enumerate}\label{curve gamma}
  \item[\rm(i)] there exist positive constants $\{C^{(j)}_{1}\}_{j=1}^{2}$ such that $|\frac{t^{j}\gamma^{(j)}(t)}{\gamma(t)}|\geq C^{(j)}_{1}$  for any $t\in (0,1]$;
  \item[\rm(ii)] there exists a positive constant $C^{(1)}_{2}$ such that $|\frac{t\gamma'(t)}{\gamma(t)}|\leq C^{(1)}_{2}$ for any $t\in (0,1]$.
\end{enumerate}
Then, there exists a positive constant $C$ uniformly for $u\in [1,2]$ such that for all $f\in L^{p}(\mathbb{R}^{2})$,
\begin{align}\label{eq:02.1}
\left\|Tf(\cdot,u)\right\|_{L_{x}^q(\mathbb{R}^2)} \leq C \|f\|_{L_x^{p}(\mathbb{R}^{2})},
\end{align}
if
\begin{center}
$(\frac{1}{p},\frac{1}{q}) \in \ \vartriangle $ and $(\frac{1}{p},\frac{1}{q})$ satisfy $1+(1 +\omega)(\frac{1}{q}-\frac{1}{p})>0$.
\end{center}
On the other hand, \eqref{eq:02.1} holds only if
\begin{center}
$(\frac{1}{p},\frac{1}{q})\in \ \vartriangle $ and $(\frac{1}{p},\frac{1}{q})$ satisfy $1+(1 +\omega)(\frac{1}{q}-\frac{1}{p})\geq0$.
\end{center}
Here and hereafter, the triangle $\vartriangle:=\{(\frac{1}{p},\frac{1}{q}):\ \frac{1}{2p}\leq\frac{1}{q}\leq \frac{1}{p}, \frac{1}{q}\geq\frac{2}{p}-1\}$.
\end{theorem}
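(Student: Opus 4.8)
\textbf{Proof proposal for Theorem \ref{thm3}.}

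The plan is to decompose $Tf(\cdot,u)$ dyadically near the origin and reduce to a rescaled family of averaging operators with uniform curvature, then combine a trivial $L^1$ bound, an $L^2$--improving bound, and interpolation. First I would write $Tf(x,u)=\sum_{j\le 0}T_jf(x,u)$ where $T_jf(x,u):=\int_{2^{j-1}}^{2^j}f(x_1-ut,x_2-u\gamma(t))\,\textrm{d}t$ (plus a harmless smooth piece away from $t=0$). For each $j$, perform the non-isotropic dilation $(x_1,x_2)\mapsto(2^jx_1,\gamma(2^j)x_2)$, $t\mapsto 2^jt$, which converts $T_j$ into $2^j\widetilde T_j$ acting along the curve $(ut,u\Gamma_j(t))$ with $\Gamma_j(t):=\gamma(2^jt)/\gamma(2^j)$. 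Conditions (i) and (ii) guarantee that $\Gamma_j$, $\Gamma_j'$, $\Gamma_j''$ are comparable to constants uniformly in $j$ on $[1/2,1]$; in particular $|\Gamma_j''|\gtrsim 1$, so each $\widetilde T_j$ is an averaging operator over a curve with nonvanishing curvature, for which the classical $L^{3/2}\to L^3$ estimate of Littman (and its endpoint $L^p\to L^q$ triangle, cf.\ \cite{Lit}) holds with a constant uniform in $j$.

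Next I would track how the dilation scales the $L^p\to L^q$ norm. A change of variables shows that if $\widetilde T_j:L^p\to L^q$ with norm $O(1)$, then $T_j:L^p\to L^q$ with operator norm $\approx 2^j\,\bigl(2^j\gamma(2^j)\bigr)^{-1/q}\bigl(2^j\gamma(2^j)\bigr)^{1/p}=2^{j(1+1/p-1/q)}\gamma(2^j)^{1/p-1/q}$. Using the definition of $\omega=\limsup_{t\to0^+}\ln|\gamma(t)|/\ln t$, for any $\varepsilon>0$ we have $\gamma(2^j)\gtrsim 2^{j(\omega+\varepsilon)}$ for $j$ negative (and $\le 2^{j(\omega-\varepsilon)}$), so since $\tfrac1p-\tfrac1q\le 0$ on the triangle, $\|T_j\|_{L^p\to L^q}\lesssim 2^{j(1+1/p-1/q)}2^{j(\omega+\varepsilon)(1/p-1/q)}=2^{j[1+(1+\omega+\varepsilon)(1/p-1/q)]}$. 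Thus $\sum_{j\le 0}\|T_j\|<\infty$ precisely when $1+(1+\omega)(\tfrac1q-\tfrac1p)>0$, which is the hypothesis; summing gives the claimed bound at every $(\tfrac1p,\tfrac1q)\in\vartriangle$ lying strictly inside that half-plane. The endpoint triangle $\vartriangle$ itself is obtained by real interpolation from the vertices: $(1,1)$ and $(0,0)$ (trivial $L^\infty$ and $L^1$ bounds, where actually one must check the geometric series converges — at $(0,0)$ it does not sum, so $(0,0)$ is included only as the degenerate point $p=q$, handled directly by Minkowski), the Littman point $(\tfrac23,\tfrac13)$, and the point $(1,\tfrac12)$ coming from the $L^1\to L^2$ estimate $\|\widetilde T_jf\|_2\lesssim\|f\|_1$ via stationary phase on the curve's Fourier transform decay $|\widehat{\textrm{d}\sigma_j}(\xi)|\lesssim(1+|\xi|)^{-1/2}$.

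For the necessity half, I would test \eqref{eq:02.1} against three standard families. Taking $f=\chi_{B(0,\delta)}$ and estimating $Tf$ from below on a $\delta\times\gamma(\delta)$-type rectangle near the origin yields, after optimizing in $\delta$, the condition $1+(1+\omega)(\tfrac1q-\tfrac1p)\ge 0$; taking $f$ the indicator of a thin $\delta\times\delta^2$ (curvature-adapted) slab forces $\tfrac1q\ge\tfrac1{2p}$ (the decoupling/curvature constraint) and $\tfrac1q\ge\tfrac2p-1$; finally $\tfrac1q\le\tfrac1p$ is forced by testing on a large ball (or by translation-invariance and letting the ball grow). The main obstacle I anticipate is not any single estimate but the bookkeeping of the dilation exponents together with the $\limsup$ definition of $\omega$: one must be careful that the $\varepsilon$-loss in $\gamma(2^j)\approx 2^{j\omega}$ is absorbable, i.e.\ that the summation condition $1+(1+\omega)(\tfrac1q-\tfrac1p)>0$ is strict, and that at the endpoints of $\vartriangle$ the geometric series $\sum_j 2^{j\cdot 0}$ does not converge, so those borderline points require a separate argument (or are genuinely excluded, matching the ``almost sharp'' claim). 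Verifying the uniform-in-$j$ curvature lower bound $|\Gamma_j''(t)|\gtrsim 1$ from hypotheses (i)--(ii) is the other place where a short but slightly delicate computation is needed.
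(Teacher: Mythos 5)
Your proposal follows essentially the same route as the paper's proof: dyadic decomposition near $t=0$, non-isotropic rescaling to the family $\widetilde T_j$ along curves $\Gamma_j$ with a uniform lower bound on curvature, Littman's $L^{3/2}\to L^3$ estimate at the vertex $D=(2/3,1/3)$, interpolation with the trivial diagonal bound, and a weighted geometric series whose convergence is governed by $\omega$. The skeleton is sound, but three points need correction. First, $(1,\tfrac12)$ is not a vertex of $\vartriangle$ — it violates $\tfrac1q\geq\tfrac2p-1$ — and it is not needed, since $\vartriangle$ is already the convex hull of $(0,0)$, $(1,1)$ and $(2/3,1/3)$; moreover the claimed $L^1\to L^2$ bound is false, because the decay $|\widehat{d\sigma}(\xi)|\lesssim(1+|\xi|)^{-1/2}$ in $\mathbb{R}^2$ gives $\widehat{d\sigma}\notin L^2$, so no $L^1\to L^2$ mapping for $f\mapsto f*d\sigma$ follows from Plancherel or Young. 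Second, the claim that the geometric series diverges at $(0,0)$ is wrong: on the diagonal $p=q$ the weight is $2^j|2^j\gamma(2^j)|^{0}=2^j$, so $\sum_{j\le 0}2^j=2<\infty$, consistent with $1+(1+\omega)\cdot 0=1>0$. Third, the cleaner order (the paper's) is to first establish a uniform-in-$j$ bound for $\widetilde T_j$ on all of $\vartriangle$ by interpolating the Minkowski and Littman endpoints, and only then sum the weighted series; interpolating the unrescaled $T$ directly between vertices presupposes the very estimates you are trying to prove. Your necessity sketch is compatible with the paper's, which cites Theorem \ref{thm2} for three of the conditions and constructs a new example (a tubular neighborhood of the curve $(-ut,-u\gamma(t))$) only for $\tfrac1{2p}\le\tfrac1q$.
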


\begin{remark}\label{remark 3}
\emph{The local smoothing phenomenon}. Since the estimate in \eqref{eq:02.1} is uniform for $u\in [1,2]$. By taking a $L^q$ integral about $u$ over $[1,2]$, it leads to the $L_x^p(\mathbb{R}^2)\rightarrow L_{x,u}^q(\mathbb{R}^2\times [1,2])$ estimate for $T$. Therefore, Theorem \ref{thm1} will hold in the region of $(\frac{1}{p}, \frac{1}{q})$ in Theorem \ref{thm3}.

On the other hand, from definitions of $\square$ and $\vartriangle$, we see that the trapezium $\square$ given by the open convex hull of the points $\{O, A, D, C\}$ but with the half open lines $(O, A]$ and $(D, A]$\footnote{For the convenience of statement of Theorem \ref{thm1}, we set $\square$ does not contain the points $O$ and $D$.}, and the triangle $\vartriangle$ given by the closed convex hull of the points $\{O, A, D\}$, see the following Figure \ref{Figure:2}. As a result, we have $\vartriangle\subsetneqq \square\cup \{(0,0)\}\cup \{(\frac{2}{3},\frac{1}{3})\}$, which leads to the fact that the $L^q$ integral about $u$ on $[1,2]$ will extend the region of $(\frac{1}{p},\frac{1}{q})$ in $T:\ L_x^p(\mathbb{R}^2)\rightarrow L_{x,u}^q(\mathbb{R}^2\times [1,2])$. This phenomenon is the so called local smoothing phenomenon. It is well known that this local smoothing phenomenon is very important in harmonic analysis, such as the local smoothing estimates originated from the works of Mockenhaupt, Seeger and Sogge \cite{MSS,MSS92}.

If we consider $\gamma=\gamma_1$, $\gamma_2$, $\gamma_3$ or $\gamma_6$, we can express the region of $(\frac{1}{p}, \frac{1}{q})$ in Theorem \ref{thm3} as the shaded region in Figure \ref{Figure:2}. Then, there is an additional triangle region $OEF'$, which is produced by the local smoothing phenomenon.
\begin{figure}[htbp]
  \centering
  \includegraphics[width=3.8in]{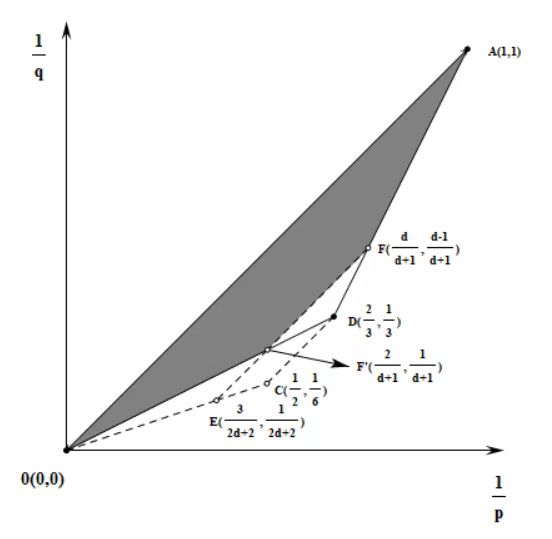}\\
  \caption{The region of $(\frac{1}{p}, \frac{1}{q})$ in Theorem \ref{thm3} for $\gamma=\gamma_1$, $\gamma_2$, $\gamma_3$ or $\gamma_6$.}\label{Figure:2}
\end{figure}
\end{remark}

The proof of Theorem \ref{thm3} relies on the following two lemmas.

\begin{lemma}\label{lemma 2.1}
(\cite[Lemma 2.3]{LiuYu}) Recall that $\omega=\limsup_{t\rightarrow 0^{+}}\frac{\ln|\gamma(t)|}{\ln t}$. Then, for all $(\frac{1}{p},\frac{1}{q})$ satisfying $\frac{1}{q}\leq \frac{1}{p}$ and $1+(1 +\omega)(\frac{1}{q}-\frac{1}{p})>0$, we have
\begin{align*}
\sum_{j\in \mathbb{Z}_0^{-}} 2^j|2^j\gamma(2^j)|^{\frac{1}{q}-\frac{1}{p}}<\infty.
\end{align*}
\end{lemma}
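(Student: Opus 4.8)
The plan is to show that the series $\sum_{j\in\mathbb{Z}_0^-}2^j|2^j\gamma(2^j)|^{\frac{1}{q}-\frac{1}{p}}$ converges by comparing it to a geometric series. Write $j=-m$ with $m\in\mathbb{N}_0$, so $2^j=2^{-m}$. The key observation is that the quantity $\omega=\limsup_{t\to 0^+}\frac{\ln|\gamma(t)|}{\ln t}$ controls the polynomial growth rate of $|\gamma(t)|$ as $t\to 0^+$: for every $\varepsilon>0$ there exists $t_\varepsilon\in(0,1]$ such that $\ln|\gamma(t)|\geq(\omega-\varepsilon)\ln t$ for all $t\in(0,t_\varepsilon]$ (here $\ln t<0$, so this reads $|\gamma(t)|\leq t^{\omega-\varepsilon}$ when we are careful about the direction of the inequality; since $\frac{1}{q}-\frac{1}{p}\leq 0$, smaller $|\gamma(t)|$ only makes the terms larger, so I actually want the reverse bound). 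More precisely, since $\frac{1}{q}-\frac{1}{p}\le 0$, I need a \emph{lower} bound on $|\gamma(2^j)|$, equivalently an \emph{upper} bound on $|\gamma(2^j)|^{-1}$, so I should use the definition of $\limsup$ to get: for each $\varepsilon>0$, there is $j_\varepsilon\in\mathbb{Z}_0^-$ such that $\frac{\ln|\gamma(2^j)|}{\ln 2^j}\le\omega+\varepsilon$ for all $j\le j_\varepsilon$, which gives $|\gamma(2^j)|\ge 2^{j(\omega+\varepsilon)}$ for $j\le j_\varepsilon$ (again using $\ln 2^j<0$).

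With this, the tail of the series (over $j\le j_\varepsilon$) is bounded by
\begin{align*}
\sum_{j\le j_\varepsilon}2^j\,\big(2^j\cdot 2^{j(\omega+\varepsilon)}\big)^{\frac{1}{q}-\frac{1}{p}}
=\sum_{j\le j_\varepsilon}2^{j\left(1+(1+\omega+\varepsilon)\left(\frac{1}{q}-\frac{1}{p}\right)\right)},
\end{align*}
which is a geometric series in $2^j$ (summed over $j\to-\infty$) with ratio determined by the exponent $1+(1+\omega+\varepsilon)(\frac{1}{q}-\frac{1}{p})$. This converges precisely when that exponent is positive. Since by hypothesis $1+(1+\omega)(\frac{1}{q}-\frac{1}{p})>0$ is a \emph{strict} inequality and $\frac{1}{q}-\frac{1}{p}\le 0$, one can choose $\varepsilon>0$ small enough that $1+(1+\omega+\varepsilon)(\frac{1}{q}-\frac{1}{p})>0$ still holds; this is where strictness of the hypothesis is used. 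The finitely many remaining terms with $j_\varepsilon<j\le 0$ contribute a finite sum (each term is finite because $\gamma(2^j)\ne 0$ — here one uses that $\gamma$ is monotonic and $\gamma(t)\to 0$, hence $\gamma$ does not vanish on $(0,1]$, or at worst one restricts to $t$ small where condition (i) forces $\gamma\ne 0$). Adding the two pieces gives the finiteness claim.

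The main (minor) obstacle is bookkeeping the sign of $\ln 2^j$ and the sign of $\frac{1}{q}-\frac{1}{p}$ consistently, so that the $\limsup$ definition is applied in the direction that produces a \emph{summable} upper bound rather than a useless one; the condition $\frac{1}{q}\le\frac1p$ is exactly what guarantees $\frac1q-\frac1p\le 0$ so that replacing $|\gamma(2^j)|$ by its lower bound $2^{j(\omega+\varepsilon)}$ increases the terms. No curvature conditions on $\gamma$ are needed for this lemma beyond ensuring $\gamma(2^j)\neq 0$; it is purely a statement about the growth exponent $\omega$. (This lemma is in fact quoted from \cite[Lemma 2.3]{LiuYu}, so the argument above reconstructs that proof.)
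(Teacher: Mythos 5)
Your proof is correct: since the paper quotes Lemma~\ref{lemma 2.1} from \cite[Lemma~2.3]{LiuYu} without reproducing a proof, there is no in-paper argument to compare against, but your reconstruction is the natural one and the bookkeeping is right. In particular, you correctly use the $\limsup$ to get the lower bound $|\gamma(2^j)|\ge 2^{j(\omega+\varepsilon)}$ for $j$ sufficiently negative (the inequality flips because $\ln 2^j<0$), correctly observe that $\tfrac1q-\tfrac1p\le 0$ turns this into an upper bound on the summand $2^{j[1+(1+\omega+\varepsilon)(\frac1q-\frac1p)]}$, and correctly use strictness of the hypothesis to pick $\varepsilon$ small enough to keep the exponent positive so the geometric tail converges; the remaining finitely many terms are finite since condition~(i) of Theorem~\ref{thm1} forces $\gamma\neq 0$ on $(0,1]$.
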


\begin{lemma}\label{lemma 2.2}
(\cite{Lit} or \cite[Lemma 2.1]{Cho}) Let $I$ be a compact interval and let $\phi:\ I\rightarrow\mathbb{R}$ be a $C^2$ function. Suppose there exists a positive constant $C$ such that $|\phi''(t)|\geq C$ for $t\in I$. Then $T^{\phi}_{e}$ given by
\begin{align*}
T^{\phi}_{e}f(x):=\int_{I}f(x_1-t,x_2- \phi(t))\,\textrm{d}t
\end{align*}
satisfies
 \begin{align*}
\|T^{\phi}_{e}f\|_{L^3(\mathbb{R}^2)} \leq C' \|f\|_{L^{3/2}(\mathbb{R}^{2})}
\end{align*}
with some constants $C'$ depending only on $C$.
\end{lemma}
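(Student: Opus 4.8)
The final statement to prove is Lemma~\ref{lemma 2.2}, the classical $L^{3/2}\to L^3$ estimate for the averaging operator along a curve with non-vanishing curvature. The plan is to prove this by a standard $TT^*$-meets-interpolation argument combined with a damping/Fourier-decay estimate, following the approach of Littman and the exposition in Christ-type arguments. First I would cut off the curve piece in a dyadic fashion is unnecessary here since $I$ is compact; instead, I would write $T^\phi_e f = f * \mu$ where $\mu$ is the pushforward of Lebesgue measure on $I$ under $t \mapsto (t,\phi(t))$. The key analytic input is that, because $|\phi''|\geq C$ on $I$, the measure $\mu$ has Fourier transform decaying like $|\widehat{\mu}(\xi)| \lesssim (1+|\xi|)^{-1/2}$ by the van der Corput lemma / method of stationary phase (this is exactly where the curvature hypothesis enters).

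The second ingredient is the trivial estimate $\|f*\mu\|_{L^\infty} \leq \|\mu\|\,\|f\|_{L^1} \lesssim \|f\|_{L^1}$ and, dually, $\|f*\mu\|_{L^1}\lesssim\|f\|_{L^1}$, together with $\|f*\mu\|_{L^2}\lesssim\|f\|_{L^2}$ from Plancherel. The standard route to the off-diagonal $L^{3/2}\to L^3$ bound is to interpolate with a Sobolev-improving estimate: using $|\widehat\mu(\xi)|\lesssim(1+|\xi|)^{-1/2}$ one gets that convolution with $\mu$ maps $L^2\to L^2_{1/2}$ (the $L^2$-Sobolev space of order $1/2$), hence by Sobolev embedding in $\mathbb{R}^2$ one has a gain; more precisely, the slick argument is the analytic interpolation of the operator family obtained by analytically continuing a suitable fractional-integration-damped version of $\mu$, i.e. consider $\mu_z$ with a factor making its Fourier transform of size $(1+|\xi|)^{-z/2}$-ish, establish $L^2\to L^2$ on the line $\mathrm{Re}\,z=1$ and $L^1\to L^\infty$ on $\mathrm{Re}\,z = -\epsilon$, and interpolate via Stein's complex interpolation theorem to land at $z$ giving the $L^{3/2}\to L^3$ endpoint. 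Alternatively, and perhaps cleanest to write up, one invokes the general principle: if $\mu$ is a finite measure supported on a compact set with $|\widehat\mu(\xi)|\lesssim(1+|\xi|)^{-a}$ and $\mu$ is absolutely continuous with respect to a smooth measure on a $1$-dimensional curve in $\mathbb{R}^2$ with bounded density, then $f\mapsto f*\mu$ is bounded $L^p\to L^q$ for $(\tfrac1p,\tfrac1q)$ in the closed triangle with vertices $(0,0),(1,1),(\tfrac{a+1}{a+2},\tfrac{1}{a+2})$; with $a=\tfrac12$ the third vertex is exactly $(\tfrac35,\tfrac25)$ — wait, that is not $(\tfrac23,\tfrac13)$.

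Let me reconsider: the correct endpoint here is forced by the dimensional bookkeeping for a $1$-dimensional measure in $\mathbb{R}^2$ with decay $1/2$, which does land at $(\tfrac23,\tfrac13)$ after combining the $L^1$–$L^\infty$ bound, the $L^2$-Sobolev bound, and Sobolev embedding $L^2_{1/2}(\mathbb{R}^2)\hookrightarrow L^4(\mathbb{R}^2)$ — indeed $f*\mu: L^2\to L^2_{1/2}\hookrightarrow L^4$ is not quite it either, so the honest path is: interpolate the trivial $L^1\to L^1$ bound with the bound $L^{4/3}\to L^4$ that comes from $\mu\in L^2_{-1/2-}$ being, after one derivative's worth of smoothing, in a Hardy–Littlewood–Sobolev-friendly class; the precise endpoint $(\tfrac23,\tfrac13)$ is obtained at the intersection dictated by restricted-type interpolation between $(\tfrac12,\tfrac12)$ (Plancherel) improved to $(\tfrac12+\tfrac14,\tfrac12-\tfrac14)=(\tfrac34,\tfrac14)$ via the half-derivative gain and $(1,1)$ trivially; the line through $(\tfrac34,\tfrac14)$ and $(1,1)$ passes through $(\tfrac23,\tfrac13)$, giving exactly the claimed estimate. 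I would therefore present the proof as: (1) reduce to $f*\mu$; (2) van der Corput to get $|\widehat\mu(\xi)|\lesssim(1+|\xi|)^{-1/2}$, which is the main obstacle only insofar as it requires the curvature hypothesis and a routine stationary-phase computation with the lower bound $|\phi''|\geq C$ used to control the Hessian; (3) deduce $f*\mu:L^2\to L^2$, and $f*\mu: L^2\to L^4$ with the half-derivative gain using Littlewood–Paley decomposition $\mu = \sum_k \mu_k$ with $\|\mu_k * f\|_{L^4} \lesssim 2^{k/2}\cdot 2^{-k/2}\|f\|_{L^2}$ summed geometrically (the frequency-localized pieces gaining $2^{-k/2}$ from decay against a $2^{k/2}$ loss from Bernstein/$L^2\to L^4$ in $\mathbb{R}^2$, which is a wash, so one actually needs the more careful $L^2\to L^4$ local smoothing-free argument of Strichartz/Littman — I will instead cite Littman directly); and (4) interpolate with $L^1\to L^1$. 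The genuinely delicate point, and the one I would spend the most care on, is getting the $L^{3/2}\to L^3$ endpoint itself rather than an open-interval version, which is precisely why the lemma is quoted from Littman \cite{Lit} and Cho \cite{Cho}: I would simply invoke their result, noting that the constant depends only on the lower bound $C$ for $|\phi''|$ and on $|I|$, both of which are harmless here.
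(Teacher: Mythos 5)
The paper offers no proof of Lemma~\ref{lemma 2.2} at all: it is stated as a known result with the citations to Littman \cite{Lit} and Cho \cite[Lemma 2.1]{Cho}, exactly as you ultimately decide to do at the end of your proposal. So your final move matches the paper. However, the derivation sketch you give on the way contains genuine errors that you half-notice but do not resolve, and they would matter if you tried to present this as a self-contained proof rather than a citation. The claim that the line through $(\tfrac34,\tfrac14)$ and $(1,1)$ passes through $(\tfrac23,\tfrac13)$ is false: that line is $\tfrac1q = \tfrac2p - 1$, and $(\tfrac23,\tfrac13)$ does not satisfy it. Likewise $L^2\to L^2_{1/2}\hookrightarrow L^4$ plus $L^1\to L^1$ does not reach $(\tfrac23,\tfrac13)$ by interpolation, and placing the $L^1\to L^\infty$ endpoint of the analytic family at $\mathrm{Re}\,z=-\epsilon$ is backwards. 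The correct (Littman) scheme damps in \emph{physical} space, not Fourier space: replace $\delta_0(s)$ in the normal direction by $s_+^{z-1}/\Gamma(z)$ times a cutoff, normalized by $e^{z^2}$. At $\mathrm{Re}\,z=1$ the resulting $\nu_z$ is a bounded compactly supported function, giving $L^1\to L^\infty$; at $\mathrm{Re}\,z=-\tfrac12$ the extra $|\xi|^{1/2}$ in Fourier from the $s_+^{z-1}$ factor exactly cancels the $|\xi|^{-1/2}$ van der Corput decay coming from $|\phi''|\geq C$, giving a bounded multiplier and hence $L^2\to L^2$; Stein interpolation at $z=0$ (which is the $\theta=\tfrac13$ point between $-\tfrac12$ and $1$) lands precisely at $(\tfrac1p,\tfrac1q)=(\tfrac23,\tfrac13)$, i.e.\ $L^{3/2}\to L^3$, with constants controlled only by the lower bound $C$ on $|\phi''|$ and $|I|$. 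Given the paper's treatment, simply citing \cite{Lit} and \cite{Cho} is entirely acceptable; but if you want the interpolation sketch to survive scrutiny you should replace the Sobolev-embedding detour with the analytic-family argument above.
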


\smallskip

We now turn to the proof of Theorem \ref{thm3}. Without loss of generality, we can assume that $f\geq 0$. We split $T$ into pieces via a standard partition of unity. Let $\psi:\ \mathbb{R}^+\rightarrow\mathbb{R}$ be a smooth function supported on $\{t\in \mathbb{R}:\ \frac{1}{2}\leq t\leq 2\}$ with $0\leq \psi(t)\leq 1$ and $\Sigma_{j\in \mathbb{Z}} \psi_j(t)=1$ for any $t> 0$, where $\psi_j(t):=\psi (2^{-j}t)$. We conclude that $T$ can be bounded by the sum of $T_j$ over $j\in \mathbb{Z}_0^{-}$, where
$$T_{j}f(x,u):=\int_{0}^{\infty}f(x_1-ut,x_2-u \gamma(t))\psi_j(t)\,\textrm{d}t,$$
which can be rewritten as, by a change of variables,
$$\int_{0}^{\infty}f(x_1-u2^jt,x_2-u \gamma(2^jt))\psi(t)2^j\,\textrm{d}t.$$
Let us set
\begin{align}\label{eq:02.02}
\widetilde{T}_{j}f(x,u):=\int_{0}^{\infty}f(x_1-ut,x_2-u \Gamma_j(t))\psi(t)\,\textrm{d}t
\end{align}
with $\Gamma_{j}(t):=\frac{\gamma(2^jt)}{\gamma(2^j)}$, and we denote the non-isotropic dilation $\delta_j$ by
$\delta_jf(x):=f(2^jx_1,\gamma(2^j)x_2)$ for any $j\in \mathbb{Z}_0^{-}$. Note that
$$|2^j\gamma(2^j)|^{\frac{1}{q}}\|\delta_jf\|_{L^{q}(\mathbb{R}^{2})}=\|f\|_{L^{q}(\mathbb{R}^{2})}~~~
 \textrm{and} ~~~ \delta_j\left(  T_{j}f\right)=2^j  \widetilde{T}_{j}(\delta_jf) $$
holds for all $j\in \mathbb{Z}_0^{-}$, it suffices to prove that
\begin{align*}
\sum_{j\in \mathbb{Z}_0^{-}} 2^j|2^j\gamma(2^j)|^{\frac{1}{q}-\frac{1}{p}} \left\| \widetilde{T}_{j} \right\|_{L_x^{p}(\mathbb{R}^{2})\rightarrow L_x^{q}(\mathbb{R}^{2})}\lesssim 1
\end{align*}
uniformly in $u\in [1,2]$. From Lemma \ref{lemma 2.1}, if $(\frac{1}{p}, \frac{1}{q})$ lies in the region of Theorem \ref{thm3}, it follows that the series $\sum_{j\in \mathbb{Z}_0^{-}} 2^j|2^j\gamma(2^j)|^{\frac{1}{q}-\frac{1}{p}}$ is bounded. Therefore, for all $(\frac{1}{p},\frac{1}{q}) \in \ \vartriangle $, we need only give an estimate $\widetilde{T}_{j} :\ L_x^p(\mathbb{R}^2)\rightarrow L_{x}^q(\mathbb{R}^2)$ uniformly in $u\in [1,2]$ and $j\in \mathbb{Z}_0^{-}$.

By Minkowski's inequality, for $p\in[1,\infty]$, it is easy to obtain the $L_{x}^p(\mathbb{R}^2)$ boundedness of $\widetilde{T}_{j}$ uniformly in $u\in [1,2]$ and $j\in \mathbb{Z}_0^{-}$. It is the desired estimate on the closed line $[O, A]$ in Figure \ref{Figure:2}. By interpolation, it suffices to prove the uniform $L_x^{3}(\mathbb{R}^2)\rightarrow L_{x}^{3/2}(\mathbb{R}^2)$ estimate for $\widetilde{T}_{j}$, i.e., the estimate at point $D$ in Figure \ref{Figure:2}. We will use Lemma \ref{lemma 2.2} to complete the proof. Indeed, from \cite[Remark 1.4]{LSY} or \cite[Lemma 2.1]{LYu}, we have the following doubling condition,
\begin{align}\label{eq:02.2}
e^{C^{(1)}_{1}/2}\leq \frac{\gamma(2t)}{\gamma(t)}\leq e^{C^{(1)}_{2}}
\end{align}
for any $t\in (0,1]$, which will also be used in obtaining the necessity of $1+(1 +\omega)(\frac{1}{q}-\frac{1}{p})\geq0$. By a change of variables, $\widetilde{T}_{j}$ can be  pointwise bounded by $T^{\phi}_{e}$ with $I:=[\frac{1}{2}, 4]$ and $\phi(t):=u\Gamma_{j}\left(\frac{t}{u}\right)$, uniformly in $u\in [1,2]$ and $j\in \mathbb{Z}_0^{-}$. From \eqref{eq:02.2} and the following $(\textrm{iii})$ of Lemma \ref{lemma 3.1}, we have the uniform estimate that $|\phi''(t)|\gtrsim 1$ for any $t\in I$. In view of Lemma \ref{lemma 2.2}, we obtain
\begin{align}\label{eq:02.3}
\|\widetilde{T}_{j}f\|_{L^3(\mathbb{R}^2)} \lesssim \|f\|_{L^{3/2}(\mathbb{R}^{2})}
\end{align}
holds uniformly in $u\in [1,2]$ and $j\in \mathbb{Z}_0^{-}$. As a result, we finish the proof of the sufficiency parts in Theorem \ref{thm2}.

We now show the necessity. From Remark \ref{remark 3} and Theorem \ref{thm2}, since the estimate is uniform in $u\in [1,2]$, we just need to show\footnote{Note: The necessity of $\frac{1}{q}\leq\frac{1}{p}$ can be obtained by a theorem of $\textrm{H}\ddot{\textrm{o}}\textrm{rmander}$ \cite{Ho}. The necessity of $\frac{1}{q}\geq\frac{2}{p}-1$ can also be established by the adjoint operator $T_u^*$ of $T_uf(x):=Tf(x,u)$, which is essentially the operator $T_u$.} the necessity of $\frac{1}{2p}\leq\frac{1}{q}$. Indeed, let $f:=\chi_{A}$, where $A$ is the $\epsilon$ neighborhood of the curve $(-ut,-u\gamma(t))$ with $t\in (0,1]$, $u\in [1,2]$ and $\epsilon>0$ small enough. Then
\begin{align*}
      \int_{0}^{1}\chi_{A}(x_1-ut,x_2-u\gamma(t))\,\textrm{d}t= 1
      \end{align*}
for any $(x_1,x_2)\in B(0, \epsilon)$. It further follows that
      \begin{align*}
      \left\|\int_{0}^{1}\chi_{A}(x_1-ut,x_2-u\gamma(t))\,\textrm{d}t\right\|_{L_x^{q}(B(0, \epsilon))}\gtrsim \epsilon^{\frac{2}{q}} ~~\textrm{and}~~ \|f\|_{L_x^{p}(\mathbb{R}^{2})}\approx \epsilon^{\frac{1}{p}}.
      \end{align*}
Apply the estimate \eqref{eq:02.1} to deduce that $\epsilon^{\frac{2}{q}-\frac{1}{p}}\lesssim 1$. As a consequence, we have that $\frac{1}{2p}\leq\frac{1}{q}$ since $\epsilon>0$ may tend to $0$. This concludes the necessity parts in Theorem \ref{thm3}.

\section{Proof of Theorem \ref{thm1}}

In this section, we devote to the proof of Theorem \ref{thm1}. Let us begin by introducing the following lemma.

\begin{lemma}\label{lemma 3.1}
(\cite[Lemma 2.2]{LSY}) Let $t\in [\frac{1}{2},2]$ and $\Gamma_{j}(t)=\frac{\gamma(2^jt)}{\gamma(2^j)}$ with $j\in \mathbb{Z}_0^{-}$. The following inequalities hold uniformly in $j$,
\begin{enumerate}
  \item[\rm(i)] $  e^{-C^{(1)}_{2}}\leq\Gamma_{j}(t)\leq e^{C^{(1)}_{2}}$;
  \item[\rm(ii)] $ \frac{C^{(1)}_1}{2e^{C^{(1)}_{2}}}\leq|\Gamma_{j}'(t)|\leq 2e^{C^{(1)}_{2}}C^{(1)}_2$;
  \item[\rm(iii)] $\frac{C^{(2)}_1}{4e^{C^{(1)}_{2}}}\leq|\Gamma_{j}''(t)|\leq 4e^{C^{(1)}_{2}}C^{(2)}_{2}$;
  \item[\rm(iv)] $|\Gamma_{j}^{(k)}(t)|\leq  2^{k} e^{C^{(1)}_{2}}C^{(k)}_{2}$ \quad \textrm{for} \textrm{all} $2\leq k\leq N$ \textrm{and} $k\in \mathbb{N}$;
  \item[\rm(v)] $|((\Gamma_{j}')^{-1})^{(k)}(t)|\lesssim 1$ \quad \textrm{for} \textrm{all} $0\leq k< N$  \textrm{and} $k\in \mathbb{N}$, \textrm{where} $(\Gamma_{j}')^{-1}$ \textrm{is} \textrm{the} \textrm{inverse} \textrm{function} \textrm{of} $\Gamma_{j}'$.
\end{enumerate}
\end{lemma}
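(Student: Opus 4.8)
The plan is to reduce all five assertions to the single rescaling identity
\[
\Gamma_j^{(k)}(t)=\frac{(2^j)^k\gamma^{(k)}(2^jt)}{\gamma(2^j)}=\frac{(2^jt)^k\gamma^{(k)}(2^jt)}{\gamma(2^jt)}\cdot\Gamma_j(t)\cdot t^{-k},
\]
obtained from the chain rule, valid for every $0\le k\le N$ and every $t$ with $2^jt\in(0,1]$ --- in particular for all $j\le-1$ when $t\in[\tfrac12,2]$; the single top scale $j=0$, where $2^jt$ may slightly exceed $1$, is handled directly from the continuity of $\gamma$ near $1$ and contributes nothing essential. In the right-hand side of this identity the first factor has modulus at most $C^{(k)}_2$ by hypothesis (ii) on $\gamma$, and at least $C^{(k)}_1$ when $k\in\{1,2\}$ by hypothesis (i) on $\gamma$, while the last factor satisfies $t^{-k}\in[2^{-k},2^k]$ because $t\in[\tfrac12,2]$. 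Hence, once the size of $\Gamma_j(t)$ itself is pinned down, all the other factors are controlled; pinning down $\Gamma_j(t)$ is exactly assertion (i).

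For (i) I would use that $\Gamma_j(t)=\gamma(2^jt)/\gamma(2^j)$ is positive (a ratio of values of $\gamma$, which does not vanish on $(0,1]$ by hypothesis (i)) and monotone in $t$, inheriting the monotonicity of $\gamma$, so on $[\tfrac12,2]$ it lies between $\Gamma_j(\tfrac12)=\gamma(2^{j-1})/\gamma(2^j)$ and $\Gamma_j(2)=\gamma(2^{j+1})/\gamma(2^j)$. The doubling inequality \eqref{eq:02.2} --- already available in the paper, and in any case a one-line consequence of integrating $\gamma'(s)/\gamma(s)$, whose modulus lies in $[C^{(1)}_1/s,\,C^{(1)}_2/s]$ by hypotheses (i) and (ii) on $\gamma$, over a dyadic interval --- then gives $\gamma(2^{j+1})/\gamma(2^j)\in[e^{C^{(1)}_1/2},e^{C^{(1)}_2}]$ and $\gamma(2^{j-1})/\gamma(2^j)\in[e^{-C^{(1)}_2},e^{-C^{(1)}_1/2}]$, so that $e^{-C^{(1)}_2}\le\Gamma_j(t)\le e^{C^{(1)}_2}$ uniformly in $j$, which is (i). Feeding this bound, together with the bounds on the two remaining factors, into the displayed identity with $k=1,2$ produces (ii) and (iii) with exactly the stated constants (using $t^{-1}\in[\tfrac12,2]$ for (ii) and $t^{-2}\in[\tfrac14,4]$ for (iii)), and for general $k$ it gives the upper bound $|\Gamma_j^{(k)}(t)|\le 2^ke^{C^{(1)}_2}C^{(k)}_2$ of (iv).

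For (v), by (ii) the map $\Gamma_j'$ is bounded away from zero and by (iii) its derivative $\Gamma_j''$ has constant sign with $|\Gamma_j''|$ bounded below, so $\Gamma_j'$ is a strictly monotone $C^{N-1}$ diffeomorphism onto its image and $(\Gamma_j')^{-1}$ is $C^{N-1}$ with $((\Gamma_j')^{-1})'(s)=1/\Gamma_j''\!\big((\Gamma_j')^{-1}(s)\big)$. Differentiating this relation repeatedly --- equivalently, applying the Fa\`a di Bruno formula for the derivatives of an inverse function --- expresses $((\Gamma_j')^{-1})^{(k)}$, for each $k<N$, as a universal polynomial in $\Gamma_j'',\dots,\Gamma_j^{(k+1)}$ evaluated at $(\Gamma_j')^{-1}(s)$, divided by a power of $\Gamma_j''$. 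Since every numerator factor is bounded above by the upper bounds in (i), (ii), (iv) and $|\Gamma_j''|$ is bounded below by (iii), all uniformly in $j$, an induction on $k$ yields $|((\Gamma_j')^{-1})^{(k)}(t)|\lesssim1$ uniformly in $j$, which is (v).

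I expect no genuine obstacle here: the whole argument is driven by the rescaling identity above together with \eqref{eq:02.2}. The only points demanding care are bookkeeping ones --- tracking the constants so that (ii)--(iv) come out with exactly the stated numerical factors, checking in the induction for (v) that no derivative of $\Gamma_j$ of order exceeding $N$ is ever needed (so that hypothesis (ii) on $\gamma$ applies at each step), and the harmless endpoint nuisance that $2^jt$ can exceed $1$ at the single scale $j=0$.
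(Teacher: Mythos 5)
Your proposal is correct and follows the standard route one would expect for this rescaling lemma (which the paper does not prove directly but cites from [LSY, Lemma 2.2]): the chain-rule identity $\Gamma_j^{(k)}(t)=\frac{(2^jt)^k\gamma^{(k)}(2^jt)}{\gamma(2^jt)}\cdot\Gamma_j(t)\cdot t^{-k}$ together with the doubling estimate \eqref{eq:02.2} gives (i)--(iv) with exactly the stated constants, and Fa\`a di Bruno for the inverse, fed by (iii) below and (iv) above, gives (v). Your care about the top scale $j=0$, $t\in(1,2]$ (where $2^jt$ exits $(0,1]$ and hence the hypotheses on $\gamma$ do not literally apply) is a genuine but harmless nuisance that is present in the original statement as well and is handled in applications by restricting the $j=0$ piece to $t\le 1$ or by a smooth extension of $\gamma$.
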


We now turn to the proof of Theorem \ref{thm1}. Similarly as in the proof of Theorem \ref{thm3}, using the fact that
 \begin{align*}
 \delta_j\left( \left\|T_{j}f\right\|_{L_{u}^q( [1,2])}\right)=2^j \left\|\widetilde{T}_{j}(\delta_jf)\right\|_{L_{u}^q( [1,2])}
  \end{align*}
holds for all $j\in \mathbb{Z}_0^{-}$, it is enough to show
\begin{align*}
\sum_{j\in \mathbb{Z}_0^{-}} 2^j|2^j\gamma(2^j)|^{\frac{1}{q}-\frac{1}{p}} \left\| \widetilde{T}_{j} \right\|_{L_x^{p}(\mathbb{R}^{2})\rightarrow L_{x,u}^q(\mathbb{R}^2\times [1,2])}\lesssim 1.
\end{align*}
From Lemma \ref{lemma 2.1}, we should only obtain $\widetilde{T}_{j} :\ L_x^{p}(\mathbb{R}^{2})\rightarrow L_{x,u}^q(\mathbb{R}^2\times [1,2])$ uniformly in $j\in \mathbb{Z}_0^{-}$ for all $(\frac{1}{p},\frac{1}{q}) \in \square\cup \{(0,0)\}\cup \{(\frac{2}{3},\frac{1}{3})\}$. By interpolation, it suffices to establish the corresponding uniform estimate with $(\frac{1}{p},\frac{1}{q})$ lies in Figure \ref{Figure:3}. For the closed line $[O, A]$, by Minkowski's inequality, we may obtain the $ L_x^{p}(\mathbb{R}^{2})\rightarrow L_{x,u}^q(\mathbb{R}^2\times [1,2])$ estimate for $\widetilde{T}_{j}$ uniformly in $j\in \mathbb{Z}_0^{-}$ for all $p=q\in [1,\infty]$. For the point $D(\frac{2}{3},\frac{1}{3})$, $L_u^q([1,2])$ integration in \eqref{eq:02.3} will leads to the uniform estimate $\widetilde{T}_{j} :\ L_x^{3/2}(\mathbb{R}^2)\rightarrow L_{x,u}^3(\mathbb{R}^2\times[1,2])$ as desired. Therefore, it suffices to consider the case of the half open line $(C, M]$ in Figure \ref{Figure:3}.
\begin{figure}[htbp]
  \centering
  \includegraphics[width=3.8in]{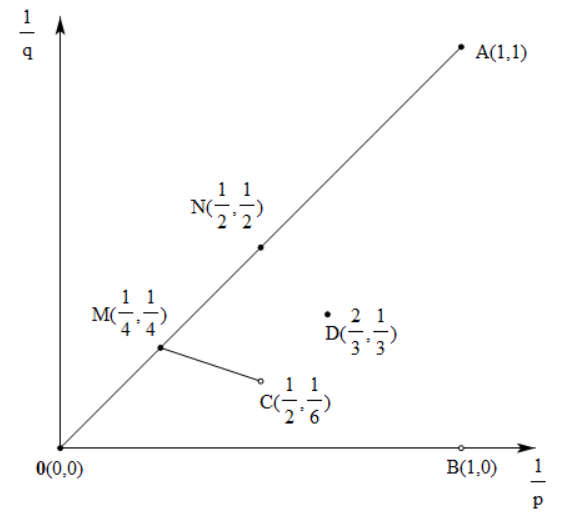}\\
  \caption{The framework to obtain the uniform estimate $\widetilde{T}_{j} :\ L_x^{p}(\mathbb{R}^{2})\rightarrow L_{x,u}^q(\mathbb{R}^2\times [1,2])$.}\label{Figure:3}
\end{figure}

After taking a Fourier transform in \eqref{eq:02.02}, let $\xi:=(\xi_1, \xi_2)$, we see that
$$\widetilde{T}_{j}f(x,u)=\int_{\mathbb{R}^{2}} \hat{f}(\xi) e^{ix\cdot\xi}H_j(u, \xi)\,\textrm{d}\xi ~~~~~ \textrm{with}~~~~~ H_j(u, \xi):=\int_{\mathbb{R}}  e^{-iu\xi_1 t-i u \xi_2 \Gamma_j(t) }\psi(t)\,\textrm{d}t.$$
Let $\Psi^0:= \sum_{k\leq 0}\psi_k$, we decompose
\begin{align}\label{eq:3.1}
\widetilde{T}_{j}f(x,u)= \widetilde{T}^0_{j}f(x,u)+ \sum_{k\geq 1} \widetilde{T}_{j,k}f(x,u)
\end{align}
and try to estimate the each decomposed pieces, where
\begin{eqnarray*}
\left\{\aligned
\widetilde{T}^0_{j}f(x,u):= \int_{\mathbb{R}^{2}} \hat{f}(\xi) e^{ix\cdot\xi}H_j(u, \xi)\Psi^0(u|\xi|)\,\textrm{d}\xi;\\
\widetilde{T}_{j,k}f(x,u):=\int_{\mathbb{R}^{2}} \hat{f}(\xi) e^{ix\cdot\xi}H_j(u, \xi)\psi_k(u|\xi|)\,\textrm{d}\xi.
\endaligned\right.
\end{eqnarray*}

Consider $\widetilde{T}^0_{j}$. Noting that $\Psi^0$ is a compactly supported smooth function, by Lemma \ref{lemma 3.1}, we have $|\partial_u^{\alpha}\partial^{\beta}_{\xi}(H(u, \xi)\Psi^0(u|\xi|))|\lesssim 1/(1+|\xi|)^3$
for all $\alpha\in  \mathbb{N}^2_0$ with $|\alpha|\leq 3$, where the implicit constant is independent of $u\in[1,2]$ and  $j\in \mathbb{Z}_0^{-}$, which further leads to $(H(u, \cdot)\Psi^0(u|\cdot|))^{\vee}\in L^{r}(\mathbb{R}^{2})$
for all $1\leq r \leq \infty$. Thus, by Young's inequality, we assert that\footnote{Here and hereafter, our estimates are all uniformly in $j\in \mathbb{Z}_0^{-}$, we will not repeat it again.}
\begin{align*}
\left\|\widetilde{T}^0_{j}f\right\|_{L_{x,u}^q(\mathbb{R}^2\times [1,2])} \lesssim\|f\|_{L_x^{p}(\mathbb{R}^{2})}
\end{align*}
for all $q\geq p \geq 1$.

Consider $\widetilde{T}_{j,k}$. The phase function in $H_j(u, \xi)$ is $\varphi_j(u,\xi,t):=-u\xi_1 t- u \xi_2 \Gamma_j(t)$. Differentiate in $t$ to obtain
\begin{align*}
\partial_t\varphi_j(u,\xi,t)=-u\xi_1 - u \xi_2 \Gamma'_j(t), ~~\partial^2_t\varphi_j(u,\xi,t)=- u \xi_2 \Gamma''_j(t), ~~\partial^2_t\varphi_j(u,\xi,t)=- u \xi_2 \Gamma'''_j(t).
\end{align*}
By $(\textrm{ii})$ of Lemma \ref{lemma 3.1}, if $|\xi_1|\geq 10 e^{C^{(1)}_{2}}C^{(1)}_2 |\xi_2|$ or $|\xi_2|\geq 10e^{C^{(1)}_{2}}/ C^{(1)}_{1}|\xi_1|$, we have $|\partial_t\varphi_j(u,\xi,t)|\gtrsim |\xi|$. Let $\rho \in C^{\infty}_{c}(\mathbb{R}^{+})$ be a function such that $\rho =1$ on $[C^{(1)}_{1}/ (10e^{C^{(1)}_{2}}), 10 e^{C^{(1)}_{2}}C^{(1)}_2]$. Observe that $|\xi|\approx 2^k$, integration by parts shows that $|\partial_u\partial^{\alpha}_{\xi} [(1-\rho(\frac{|\xi_1|}{|\xi_2|}))H(u, \xi)\psi_k(u|\xi|)]|\lesssim 1/ (1+|\xi|)^3$ for all $|\alpha|\leq 3$, where the implicit constant is independent of $u\in[1,2]$. We now define
\begin{eqnarray*}
\left\{\aligned
&\widetilde{T}^1_{j,k}f(x,u):=\int_{\mathbb{R}^{2}} \hat{f}(\xi) e^{ix\cdot\xi}\rho\left(\frac{|\xi_1|}{|\xi_2|}\right)H_j(u, \xi)\psi_k(u|\xi|)\,\textrm{d}\xi;\\
&\widetilde{T}^2_{j,k}f(x,u):=\int_{\mathbb{R}^{2}} \hat{f}(\xi) e^{ix\cdot\xi}\left(1-\rho\left(\frac{|\xi_1|}{|\xi_2|}\right)\right)H_j(u, \xi)\psi_k(u|\xi|)\,\textrm{d}\xi.
\endaligned\right.
\end{eqnarray*}
Then we conclude immediately the desired estimate that
\begin{align}\label{eq:3.2}
\left\|\widetilde{T}^2_{j,k}f\right\|_{L_{x,u}^q(\mathbb{R}^2\times [1,2])} \lesssim 2^{-k} \|f\|_{L_x^{p}(\mathbb{R}^{2})}
\end{align}
for all $q\geq p \geq 1$. This, combined with \eqref{eq:3.1}, implies that we should only to bound the operator $\widetilde{T}^1_{j,k}$ with a decay in $k$.

Consider $\widetilde{T}^1_{j,k}$. If $\xi_1\xi_2\geq0$, it is easy to see that $|\partial_t\varphi_j(u,\xi,t)|\gtrsim |\xi|$. By the argument as above, we also obtain the desired estimate \eqref{eq:3.2} for $\widetilde{T}^1_{j,k}$. Therefore, from now on, we restrict our attention to the most difficult case of $\xi_1\xi_2<0$. Let
$$\partial_t\varphi_j(u,\xi,t_0)=-u\xi_1 - u \xi_2 \Gamma'_j(t_0)=0.$$
Note that $t_0$ is the critical point and $\Gamma'_j(t_0)=-\frac{\xi_1}{\xi_2}$. Since our estimates about $t_0$ is uniform in $j$, we omit the parameter $j$ in the notation $t_0$. Since $\Gamma'_j$ is strictly monotonic, we can write $t_0=(\Gamma'_j)^{-1}(-\frac{\xi_1}{\xi_2})$. We can further assume that $t_0\in (1/2,2)$. Otherwise, it is easy to see that $|\varphi'_t(u,\xi,t)|\gtrsim |\xi|$ holds, which leads to the desired estimate as above.

We consider a one-dimensional oscillatory integral involving a phase function with a non-degenerate critical point. Based on the spirit of stationary phase estimates, we rewrite
\begin{align*}
H_j(u, \xi)=\int_{\mathbb{R}}  e^{i\varphi_j(u,\xi,t+t_0) }\psi(t+t_0)\,\textrm{d}t.
\end{align*}
By Taylor's theorem and the fact that $t_0$ is the critical point, we conclude that
\begin{align*}
\varphi_j(u,\xi,t+t_0)=\varphi_j(u,\xi,t_0)-u\xi_2t^2\eta_j(t,t_0),
\end{align*}
where $\eta_j(t,t_0):=\frac{\Gamma''_j(t_0)}{2}+\frac{t}{2}\int_0^1(1-\theta)^2\Gamma'''_j(\theta t+t_0)\,\textrm{d}\theta$, which can be viewed as a compactly supported smooth function. Therefore,
\begin{align}\label{eq:3.3}
\widetilde{T}^1_{j,k}f(x,u)=\int_{\mathbb{R}^{2}} e^{ix\cdot\xi}e^{i \varphi_j(u,\xi,t_0)}m_{j,k}(u,\xi) \hat{f}(\xi)\,\textrm{d}\xi,
\end{align}
where $$m_{j,k}(u,\xi):=\rho\left(\frac{|\xi_1|}{|\xi_2|}\right)\psi_k(u|\xi|)  \int_{\mathbb{R}}  e^{-iu\xi_2t^2\eta_j(t,t_0) }\psi(t+t_0)\,\textrm{d}t.$$

\textbf{Localization}.

$\widetilde{T}^1_{j,k}$ can be localized. Indeed, we write
\begin{align*}
\widetilde{T}^1_{j,k}f(x,u)=\int_{\mathbb{R}^{2}}K_{j,k}(x,u,y) f(y) \,\textrm{d}y
\end{align*}
with the kernel
\begin{align*}
K_{j,k}(x,u,y):=\int_{\mathbb{R}^{2}} e^{i(x-y)\cdot\xi}e^{i \varphi_j(u,\xi,t_0)}m_{j,k}(u,\xi) \,\textrm{d}\xi.
\end{align*}
Notice that $\Gamma'_j(t_0)=-\frac{\xi_1}{\xi_2}$, $t_0\approx 1$ and $\Gamma_j(t_0)\approx 1$, it follows that there exists a positive constant $\varpi$ large enough such that $|\nabla_{\xi}[(x-y)\cdot\xi+\varphi_j(u,\xi,t_0)]|\gtrsim |x-y|$
if $|x-y|\geq \varpi$. Therefore, via an integration by parts, we can bound the kernel $K_{j,k}$
by $2^{(-\frac{1}{2}-N)k}/ |x-y|^N$ uniformly in $u\in [1,2]$ for some $N\in \mathbb{N}$ large enough and $|x-y|\geq \varpi$. Furthermore, we may obtain the desired estimate that
\begin{align*}
\left\|\widetilde{T}^{1,a}_{j,k}f\right\|_{L_{x,u}^q(\mathbb{R}^2\times [1,2])}\lesssim 2^{(-\frac{1}{2}-N)k} \|f\|_{L_x^{p}(\mathbb{R}^{2})}
\end{align*}
for all $q\geq p \geq 1$, where
\begin{align*}
\widetilde{T}^{1,a}_{j,k}f(x,u):=\int_{\mathbb{R}^{2}} \chi_{B^{\complement}(x,\varpi)} (y) K_{j,k}(x,u,y)  f(y) \,\textrm{d}y.
\end{align*}
As a result, we should only to consider the following localized operator,
\begin{align}\label{eq:3.4}
\widetilde{T}^{1,b}_{j,k}f(x,u):=\int_{\mathbb{R}^{2}} \chi_{B(x,\varpi)} (y)K_{j,k}(x,u,y) f(y) \,\textrm{d}y.
\end{align}

\textbf{Reduce to a local smoothing estimate}.

We start with some definitions. Let $\Omega:\ \mathbb{R}^2\times\mathbb{R}\rightarrow\mathbb{R}$ be a nonnegative smooth function with the properties that $\Omega(x,u)=1$ on $\{(x,u)\in \mathbb{R}^2\times\mathbb{R}:\ |x|\leq \varpi, u\in [1,2]\}$ and vanishing outside $\{(x,u)\in \mathbb{R}^2\times\mathbb{R}:\  |x|\leq 2\varpi, u\in(1/2, 5/2)\}$, $\Psi_j(x,u,\xi):=x\cdot\xi+\varphi_j(u,\xi,t_0)$ and $a_{j,k}(x,u,\xi):=\Omega(x,u)m_{j,k}(u,\xi)$. Define
\begin{align}\label{eq:3.5}
F_{j,k}f(x,u):=\int_{\mathbb{R}^{2}} e^{i\Psi_j(x,u,\xi)}a_{j,k}(x,u,\xi) \hat{f}(\xi)\,\textrm{d}\xi.
\end{align}
In order to estimate $\widetilde{T}^{1,b}_{j,k}$ in \eqref{eq:3.4}, it suffices to obtain the following estimate: there exists a positive constant $\varepsilon$ such that
\begin{align}\label{eq:3.6}
\left\|F_{j,k}f\right\|_{L_{x,u}^{q}(\mathbb{R}^{3})} \lesssim 2^{- \varepsilon k} \|f\|_{L_x^{p}(\mathbb{R}^{2})}
\end{align}
for all $(\frac{1}{p},\frac{1}{q})$ in the half open line $(C, M]$ in Figure \ref{Figure:3}.

Indeed, we can decompose $\mathbb{R}^{2}=\bigcup_{i\in \mathbb{Z}^{2}}B(x_i,\varpi)$ such that $|x_i-x_{i'}|\approx |i-i'|\varpi$ for any $i\neq i'$. Next, we change variables to bound $\|\widetilde{T}^{1,b}_{j,k}f\|^q_{L_{x,u}^{q}(\mathbb{R}^{2}\times[1,2])}$ by
\begin{align*}
\sum_{i\in \mathbb{Z}^{2}}\left\|\int_{\mathbb{R}^{2}} \chi_{B(x,\varpi)} (y)K_{j,k}(x,u,y) f(y-x_i) \,\textrm{d}y\right\|^q_{L_{x,u}^{q}(B(0,\varpi)\times [1,2] )},
\end{align*}
which is further bounded by
\begin{align}\label{eq:3.7}
\sum_{i\in \mathbb{Z}^{2}}\left\|(E_{j,k}-F_{j,k})(f(\cdot-x_i))\right\|^q_{L_{x,u}^{q}(\mathbb{R}^{3} )}+\sum_{i\in \mathbb{Z}^{2}}\left\|F_{j,k}(f(\cdot-x_i))\right\|^q_{L_{x,u}^{q}(\mathbb{R}^{3} )},
\end{align}
where
\begin{align*}
E_{j,k}f(x,u):=\Omega(x,u)\int_{\mathbb{R}^{2}} \chi_{B(x,\varpi)} (y)K_{j,k}(x,u,y) f(y) \,\textrm{d}y.
\end{align*}
The operator $E_{j,k}-F_{j,k}$ can be handled by a similar argument to $\widetilde{T}^{1,a}_{j,k}$ since the kernel of $E_{j,k}-F_{j,k}$ is supported on $\{(x,y)\in\mathbb{R}^{2}:\ |x-y|\geq \varpi\}$. Therefore, the first part of \eqref{eq:3.7} can be controlled by
\begin{align*}
 \sum_{i\in \mathbb{Z}^{2}}2^{(-\frac{1}{2}-N)k q} \|f(\cdot-x_i)  \chi_{B(0,2\varpi)} (\cdot)\|^q_{L_{x}^{p}(\mathbb{R}^{2})}
\lesssim 2^{(-\frac{1}{2}-N)k q} \|f\|^q_{L_x^{p}(\mathbb{R}^{2})}
\end{align*}
for some $N\in \mathbb{N}$ large enough, as desired. For the second part of \eqref{eq:3.7}, by \eqref{eq:3.6}, we can also obtain the following desired estimate,
\begin{align*}
\sum_{i\in \mathbb{Z}^{2}}2^{-\varepsilon k q}\|f(\cdot-x_i)  \chi_{B(0,2\varpi)} (\cdot)\|^q_{L_{x}^{p}(\mathbb{R}^{2})}\lesssim 2^{-\varepsilon k q} \|f\|^q_{L_x^{p}(\mathbb{R}^{2})}.
\end{align*}
Putting things together, by \eqref{eq:3.7}, we can establish $\widetilde{T}^{1,b}_{j,k} :\ L_x^{p}(\mathbb{R}^{2})\rightarrow L_{x,u}^q(\mathbb{R}^2\times [1,2])$ with bound $2^{(-\frac{1}{2}-N)k}+2^{-\varepsilon k }$, this is the desired estimate. Thus, we can reduce our considerations to proving a local smoothing estimate in \eqref{eq:3.6}.

\textbf{A key local smoothing estimate}.

Consider the \emph{Fourier integral operators}
\begin{align}\label{eq:3.a}
\mathcal{F}f(z):=\int_{\mathbb{R}^{n}} e^{i\phi(z,\xi)}a(z,\xi) \hat{f}(\xi)\,\textrm{d}\xi,
\end{align}
where $a(z,\xi)$ is a symbol of order $\sigma$. Here and hereafter, $z$ is used to denote a vector in $\mathbb{R}^{n}\times \mathbb{R}$ comprised of the space-time variables $(x,u)$. Suppose that $\textrm{supp}~a(\cdot,\xi)$ is contained in a fixed compact set and $\phi(z,\cdot)$ is a homogeneous function of degree $1$. We say that $\mathcal{F}$ satisfies the \emph{cinematic curvature condition} if the following conditions are satisfied:
\begin{enumerate}
 \item[$\bullet$] \textbf{Non-degeneracy condition.} ~~For all $(z,\xi)\in \textrm{supp}~a$
 \begin{align*}
 \textrm{rank}~\partial^2_{z\xi}\phi(z,\xi)=n.
\end{align*}
 \item[$\bullet$] \textbf{Cone condition.} ~~Consider the Gauss map $G :\ \textrm{supp}~a \rightarrow \mathbb{S}^n$ by $G(z,\xi):=\frac{G_0(z,\xi)}{|G_0(z,\xi)|}$, where
  \begin{align*}
     G_0(z,\xi):=\bigwedge_{j=1}^{n} \partial_{\xi_j}\partial_z\phi(z,\xi).
      \end{align*}
The curvature condition
\begin{align*}
\textrm{rank}~\partial^2_{\xi\xi}\langle\partial_z\phi(z,\xi),G(z,\xi_0)\rangle |_{\xi=\xi_0}=n-1
\end{align*}
holds for all $(z,\xi_0)\in \textrm{supp}~a$.
\end{enumerate}
Geometrically, the curvature condition of course means that, for fixed $z_0$, the cone $$\Gamma_{z_0}:=\left\{\partial_z\phi(z_0,\eta) :\ \eta\in \mathbb{R}^{n}\backslash \{0\} ~\textrm{in}~ \textrm{a} ~\textrm{conic} ~\textrm{neighbourhood} ~\textrm{of}~ \eta_0 \right\}$$ is a smooth conic manifold of dimension $n$ with $n-1$ non-vanishing principal curvatures at every point. We can see that the phase featured in the half-wave propagator, i.e., $\phi(z,\xi)=x\cdot\xi-u|\xi|$, satisfies the cinematic curvature condition.

Now, we show some local smoothing estimates for $\mathcal{F}$ in \eqref{eq:3.a}, which will be used to prove \eqref{eq:3.6}. To state the following propositions we introduce some definitions. Let $\bar{\psi}:\ \mathbb{R}^+\rightarrow\mathbb{R}$ be a smooth function supported on $\{t\in \mathbb{R}:\ \frac{1}{8}\leq t\leq 8\}$ with $0\leq \psi(t)\leq 1$ and $\bar{\psi}(t)=1$ on $\{t\in \mathbb{R}:\ \frac{1}{4}\leq t\leq 4\}$. We then use $P_k$ to denote the operator on $\mathbb{R}^{n}$ with multiplier $\bar{\psi}_k(|\xi|)$, where $\bar{\psi}_k(|\xi|):=\bar{\psi} (2^{-k}|\xi|)$ and $k\in \mathbb{N}$.

\begin{proposition}\label{prop 3.1}
Let $1\leq p \leq q \leq \infty$ and $s\in \mathbb{R}$. Let $\mathcal{F}$ be the Fourier integral operators in \eqref{eq:3.a} with symbol of order $\sigma$ and satisfy the cinematic curvature condition, together with the special assumption that $\textrm{rank}~\partial^2_{\xi\xi}\phi(z,\xi)=n-1$ when $\xi\neq 0$. Then, if $q=p\in [1,\infty)$, we require $s\geq (n-1)|\frac{1}{p}-\frac{1}{2}|+\sigma$; if $q=\infty$, we require $s\geq \frac{n-1}{2}+\frac{1}{p}+\sigma$, there exists a positive constant $C_{(u)}$ such that
\begin{align}\label{eq:3.b}
\left\|\mathcal{F}P_kf\right\|_{L_{x}^q(\mathbb{R}^n)}\leq C_{(u)} \|f\|_{W_x^{s,p}(\mathbb{R}^{n})}.
\end{align}
Note that the case $1\leq p <q <\infty$ can be proved by interpolation, we omit the details.
\end{proposition}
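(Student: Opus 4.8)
\textbf{Proof proposal for Proposition \ref{prop 3.1}.}

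The plan is to treat the two endpoint cases separately and reduce everything to a single frequency-localized piece, since $\mathcal{F}P_k$ has symbol essentially supported on $|\xi|\approx 2^k$. First I would dispense with the diagonal case $q=p$. Here the natural tool is the classical $L^p$ theory for Fourier integral operators of Seeger--Sogge--Stein \cite{SSS} (see also \cite[Chapter IX]{Stein}): an FIO of order $\mu$ satisfying the non-degeneracy condition maps $L^p_{\mathrm{comp}}\to L^p_{\mathrm{loc}}$ provided $\mu\leq -(n-1)|\frac1p-\frac12|$. Because $a(z,\xi)$ is compactly supported in $z$ and $\phi$ is homogeneous of degree $1$ in $\xi$ with $\mathrm{rank}\,\partial^2_{z\xi}\phi=n$, the operator $\mathcal{F}$ composed with the Bessel potential $(I-\Delta)^{-s/2}$ is an FIO of order $\sigma-s$; choosing $s\geq (n-1)|\frac1p-\frac12|+\sigma$ makes this order at most $-(n-1)|\frac1p-\frac12|$, and the Seeger--Sogge--Stein theorem yields \eqref{eq:3.b} with $W^{s,p}_x$ on the right. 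The dependence of the constant on $u$ is harmless since $u$ ranges over the compact set on which $\Omega$ is supported; one simply notes the estimates are locally uniform in the (finitely many, after a partition) phase parameters.

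For the case $q=\infty$ I would argue more directly on the single dyadic block. After replacing $f$ by $P_kf$ and absorbing $2^{k\sigma}$ worth of symbol decay, the remaining operator has symbol of order $0$ supported on $|\xi|\approx 2^k$, so its kernel $K_k(x,u,y)=\int e^{i\phi(x,u,\xi)-iy\cdot\xi}\tilde a(x,u,\xi)\,\mathrm{d}\xi$ obeys, by stationary phase in $\xi$ using the non-degeneracy and the assumption $\mathrm{rank}\,\partial^2_{\xi\xi}\phi=n-1$, the bound $|K_k(x,u,y)|\lesssim 2^{k(n+1)/2}(1+2^k|\nabla_\xi\phi(x,u,\cdot)-y|)^{-M}$ transverse to the cone direction together with rapid decay off a $2^{-k}$-neighbourhood of the relevant variety; integrating this kernel against $|f|$ gives an $L^\infty_x$ bound by $2^{k[(n-1)/2+1/p]}\|f\|_{L^p_x}$ (the exponent is $(n+1)/2$ minus the $(n-1)/2+1/p'$ coming from integrating the kernel over the $(n-1)$-dimensional cone cross-section and the one transverse direction, paired with Hölder). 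Tracking the $2^{k\sigma}$ factor and summing the Schwartz tails yields the claim once $s\geq (n-1)/2+1/p+\sigma$; uniformity issues in $u$ are handled exactly as above. The intermediate range $1\leq p<q<\infty$ then follows by interpolating between $q=p$ and $q=\infty$ with $p$ fixed, as stated.

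The main obstacle I expect is bookkeeping the reduction to the model situation: the proposition allows a fairly general phase, so to invoke \cite{SSS} cleanly one must localize $\mathrm{supp}\,a$ finely enough in $(z,\xi)$ that $\phi$ can be put into the normal form $x\cdot\xi + u\,q(\xi)+(\text{lower order})$ with $q$ homogeneous of degree $1$ having Hessian of rank $n-1$, using the hypothesis $\mathrm{rank}\,\partial^2_{\xi\xi}\phi=n-1$ and a change of spatial coordinates dictated by $\partial_{z\xi}\phi$; one has to check that the finitely many pieces carry uniform constants and that the extra assumption genuinely forces the cone $\Gamma_{z_0}$ to be a hypersurface in a fixed direction so that the cross-section integrals above have the stated dimension. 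This is where the ``complicated calculations'' alluded to in the introduction enter, but none of it is conceptually deep: it is a standard, if tedious, normal-form-plus-stationary-phase computation, and once the block $\mathcal{F}P_k$ is in normal form both endpoint estimates are routine.
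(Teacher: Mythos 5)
Your treatment of the diagonal case $q=p$ is essentially the same as the paper's: both invoke the Seeger--Sogge--Stein fixed-time $L^p$ theory for non-degenerate FIOs and absorb the smoothness $s$ into the symbol order via the Bessel potential. One small gap here: SSS gives $H^1\to L^1$ at $p=1$, not $L^1\to L^1$, and you need the observation $\|P_kf\|_{H^1}\lesssim\|f\|_{L^1}$ (which the paper records explicitly) to convert; you've implicitly assumed the $p=1$ endpoint works without saying how.

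The $q=\infty$ case is where you genuinely diverge. You argue directly via stationary phase on the frequency-localized kernel $K_k$: the pointwise size $2^{k(n+1)/2}$ concentrated on a $2^{-k}$-thick shell around $\{\nabla_\xi\phi(x,u,\cdot)\}$, then H\"older against $f$. The paper instead interpolates between two endpoints: the $p=1$ case is cited from Stein's book, while the $p=\infty$ case is obtained by dualizing the SSS $H^1\to L^1$ bound to an $L^\infty\to BMO$ bound and then upgrading $BMO$ to $L^\infty$ by exploiting the frequency localization of $\mathcal{F}P_k$ (inserting $\tilde P_k$). Your kernel route is more self-contained and makes the exponent visibly geometric, at the cost of having to justify the normal-form reduction and uniformity over $\mathrm{supp}\,a$ carefully; the paper's route offloads all the hard analysis to cited theorems and only has to manage the $BMO$-to-$L^\infty$ upgrade, which is clean because the operator is single-block. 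Both are standard and correct.

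One concrete error in the bookkeeping: your parenthetical says the exponent is $(n+1)/2$ minus "$(n-1)/2+1/p'$". That arithmetic gives $1/p$, not the $(n-1)/2+1/p$ you (correctly) state. The right count is that $K_k$ has $L^{p'}_y$ norm $\lesssim 2^{k(n+1)/2}\cdot(2^{-k})^{1/p'}$, because the $(n-1)$-dimensional cross-section of the shell has $O(1)$ measure and only the single transverse direction (thickness $2^{-k}$) scales; so the exponent is $(n+1)/2-1/p'=(n-1)/2+1/p$. Your final answer is right, your explanation of it is not. Also, the "complicated calculations" remarked on in the introduction refer to verifying the cinematic curvature condition for the specific $F_{j,k}$, not to proving Proposition \ref{prop 3.1}; that's a misreading, though it doesn't affect the argument.
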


\begin{proof}[Proof of Proposition \ref{prop 3.1}.] For the case $q=p\in [1,\infty)$, which follows from the fixed time estimate in Seeger, Sogge and Stein \cite{SSS}. Here, for the case $p=q=1$, we used the estimate $\|P_kf\|_{H_{x}^1(\mathbb{R}^n)}\lesssim \|f\|_{L_x^{1}(\mathbb{R}^{n})}$. For the case $q=\infty$, by interpolation, it suffices to show
\begin{align}\label{eq:3.d}
\left\|\mathcal{F}P_kf\right\|_{L_{x}^{\infty}(\mathbb{R}^n)}\leq C_{(u)} \left\|f\right\|_{W_x^{\frac{n+1}{2}+\sigma,1}(\mathbb{R}^{n})}
\end{align}
and
\begin{align}\label{eq:3.c}
\left\|\mathcal{F}P_kf\right\|_{L_{x}^{\infty}(\mathbb{R}^n)}\leq C_{(u)} \left\|f\right\|_{W_x^{\frac{n-1}{2}+\sigma,\infty}(\mathbb{R}^{n})}.
\end{align}
\eqref{eq:3.d} follows from Stein \cite[Chapter IX, 6.14]{Stein} and the estimate $\|P_kf\|_{H_{x}^1(\mathbb{R}^n)}\lesssim \|f\|_{L_x^{1}(\mathbb{R}^{n})}$. We now prove \eqref{eq:3.c}. First of all, Seeger, Sogge and Stein \cite[Theorem 2.2]{SSS} obtained the $H_x^1(\mathbb{R}^n)\rightarrow L_{x}^1(\mathbb{R}^n)$ estimate for the Fourier integral operators with symbol of order $-\frac{n-1}{2}$. From Stein \cite[Chapter IX, Section 4]{Stein}, this result also holds for its adjoint operator, which further leads to the corresponding $L_x^{\infty}(\mathbb{R}^n)\rightarrow BMO_{x}(\mathbb{R}^n)$ estimate for these Fourier integral operators with symbol of order $-\frac{n-1}{2}$. Consequently
\begin{align}\label{eq:3.z}
\left\|\mathcal{F}P_kf\right\|_{BMO_{x}(\mathbb{R}^n)}\leq C_{(u)} \left\|f\right\|_{W_x^{\frac{n-1}{2}+\sigma,\infty}(\mathbb{R}^{n})}.
\end{align}
Notice that $\|\mathcal{F}P_kf\|_{L_{x}^{\infty}(\mathbb{R}^n)}$ can be bounded by
\begin{align*}
 \left\|\tilde{P}_k\mathcal{F}P_kf\right\|_{L_{x}^{\infty}(\mathbb{R}^n)}+ \left\|(I-\tilde{P}_k)\mathcal{F}P_kf\right\|_{L_{x}^{\infty}(\mathbb{R}^n)}\leq C_{(u)}\left( \left\|\mathcal{F}P_kf\right\|_{BMO_{x}(\mathbb{R}^n)}+\left\|f\right\|_{W_x^{\frac{n-1}{2}+\sigma,\infty}(\mathbb{R}^{n})}\right),
\end{align*}
where $\tilde{P}_kf:=(\tilde{\psi}_k(|\cdot|) \hat{f}(\cdot))^{\vee}$, $\tilde{\psi}$ is a compactly supported $C^{\infty}$ function on $\mathbb{R}\backslash \{0\}$ and $\tilde{\psi}=1$ on $[C^{-1}, C]$ for a positive constant $C$ sufficiently large. This, combined with \eqref{eq:3.z}, leads to \eqref{eq:3.c}. Therefore, the proof is complete.
\end{proof}

We give the local smoothing estimates for $\mathcal{F}$ about $P_kf$ in the following Proposition \ref{prop 3.2}. Since Theorem \ref{thm1} is almost sharp and relies heavily on this proposition, we may see that the result in  Proposition \ref{prop 3.2} is also almost sharp.

\begin{proposition}\label{prop 3.2}
Let $n=2$, $1\leq p \leq q \leq \infty$ and $s\in \mathbb{R}$. Let $\mathcal{F}$ be the Fourier integral operators in \eqref{eq:3.a} with symbol of order $\sigma$ and satisfy the cinematic curvature condition, together with the special assumption that $\textrm{rank}~\partial^2_{\xi\xi}\phi(z,\xi)=1$ when $\xi\neq 0$. If $q=p\in [1,\infty)$, we require: $s\geq\frac{1}{p}-\frac{1}{2}+\sigma$ for $p\in [1,2]$; $s> \sigma$ for $p\in (2,4)$; $s> \frac{1}{2}-\frac{2}{p}+\sigma$ for $p\in [4,\infty)$. If $q=\infty$, we require $s\geq\frac{1}{2}+\frac{1}{p}+\sigma$. Then, there exists a positive constant $C$ such that
\begin{align}\label{eq:3.e}
\left\|\mathcal{F}P_kf\right\|_{L_{x,u}^q(\mathbb{R}^2\times [1,2])}\leq C \|f\|_{W_x^{s,p}(\mathbb{R}^{2})}.
\end{align}
Note that the case $1\leq p <q <\infty$ can be obtained by interpolation, we omit the details.
\end{proposition}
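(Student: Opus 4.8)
\textbf{Proof proposal for Proposition \ref{prop 3.2}.}
The plan is to run the same argument used in Proposition \ref{prop 3.1}, except that the fixed-time machinery of Seeger--Sogge--Stein is replaced by the sharp local smoothing estimate of Gao--Liu--Miao--Xi \cite{GLMX} (which supplies the local smoothing conjecture in $2+1$ dimensions, after Guth--Wang--Zhang \cite{GWZ}). First I would isolate the model case $\sigma=0$: a symbol of order $\sigma$ can be absorbed into $P_k$ at the cost of a factor $2^{\sigma k}$, which matches the $2^{\sigma k}$ hidden in $\|f\|_{W_x^{s,p}}$ versus $\|P_kf\|_{L^p}$, so it suffices to prove $\|\mathcal{F}P_kf\|_{L^q_{x,u}(\mathbb{R}^2\times[1,2])}\lesssim 2^{sk}\|f\|_{L^p_x(\mathbb{R}^2)}$ for symbols of order $0$. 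Because the hypotheses are exactly the cinematic curvature condition plus $\mathrm{rank}\,\partial^2_{\xi\xi}\phi=1$ away from the origin — i.e. the phase is, after a conic localization and a change of variables, equivalent to $x\cdot\xi-u|\xi|$ up to lower order terms — the operator $\mathcal{F}P_k$ is a Fourier integral operator of order $0$ whose wave front is carried by a cone with one nonvanishing principal curvature, precisely the class covered by \cite{GLMX}.

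The key steps, in order, would be: (1) reduce to $\sigma=0$ and to a single conic piece via a partition of unity in $\xi/|\xi|$, using that $\mathrm{supp}\,a(\cdot,\xi)$ is compact so finitely many pieces suffice; (2) on each piece, use the nondegeneracy condition to change spatial variables so that $\partial^2_{z\xi}\phi$ is normalized, then invoke the cone condition to bring $\phi$ into the form $x\cdot\xi - u\,q(\xi)+(\textrm{lower order})$ with $q$ homogeneous of degree $1$ and $q''$ of rank one, i.e. a standard cone phase; (3) apply the sharp $L^p\to L^p_{x,u}$ local smoothing bound of \cite{GLMX} for cone multipliers/FIOs in $2+1$ dimensions, which gives exactly the stated gain: for $p\in[1,2]$ the loss is $\frac{1}{p}-\frac{1}{2}$ (this is just the fixed-time Miyachi--Peral exponent, with no local smoothing needed and no $\epsilon$-loss, hence the closed inequality $s\geq \frac1p-\frac12$); for $p\in(2,4)$ the decoupling/local smoothing input gives an $\epsilon$-loss only, hence $s>0$; for $p\in[4,\infty)$ it gives $s>\frac12-\frac2p$, again with $\epsilon$-loss; and (4) handle $q=\infty$ by the Sobolev embedding $L^\infty_u\hookleftarrow W^{1/q+\epsilon,q}_u$ combined with the fixed-time estimate, which yields $s\geq\frac12+\frac1p$ as in \eqref{eq:3.c}--\eqref{eq:3.d} of Proposition \ref{prop 3.1}; the range $1\le p<q<\infty$ then follows by interpolating the $q=p$ and $q=\infty$ endpoints, as stated.

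The main obstacle I expect is step (2): verifying that the cinematic curvature condition together with $\mathrm{rank}\,\partial^2_{\xi\xi}\phi(z,\xi)=1$ genuinely reduces our $\phi$ to a cone phase uniformly on the support of the symbol, so that the theorem of \cite{GLMX} — which is stated for a specific normalized class of FIOs — applies as a black box. Concretely, one must produce the change of variables in $z$ (depending smoothly on the conic variable) that trivializes $\partial_{\xi}\partial_z\phi$ and then check that the residual $u$-dependence is, modulo order-$(-1)$ symbols, exactly a rank-one homogeneous function; this is where the ``complicated calculations'' alluded to in the introduction live, and it is the only place the specific structure of our phase $\Psi_j(x,u,\xi)=x\cdot\xi+\varphi_j(u,\xi,t_0)$ (via Lemma \ref{lemma 3.1}, uniformly in $j$) really enters. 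A secondary technical point is the $q=p=1$ endpoint, where one must pass through the Hardy space estimate $\|P_kf\|_{H^1}\lesssim\|f\|_{L^1}$ exactly as in Proposition \ref{prop 3.1}, and the borderline nature of the $p\in(2,\infty)$ bounds (strict inequalities in $s$), which simply reflects the $\epsilon$-removal that is not available at the level of \cite{GLMX}.
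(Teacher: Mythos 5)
Your plan captures the paper's four cases and their conclusions, but it is organised differently from the actual proof, and the place you flag as the main obstacle is a misdiagnosis. The paper's proof is much shorter than what you propose: for $q=p\in[1,2]$ and for $q=\infty$ one simply observes that the constant $C_{(u)}$ in Proposition \ref{prop 3.1} is uniform for $u\in[1,2]$, so the fixed-time $L^p_x$ (resp.\ $L^\infty_x$) bound integrates (resp.\ sups) trivially over $u$; for $q=p\in[4,\infty)$ one cites \cite[Theorem 1.4]{GLMX} directly; for $q=p\in(2,4)$ one interpolates between $p=2$ and $p=4$. That is the whole proof.

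Two specific corrections. First, the ``main obstacle'' you raise in step (2) --- normalising the phase to a cone phase $x\cdot\xi-u\,q(\xi)$ so that \cite{GLMX} applies --- does not arise in this proposition, because the theorem of Gao--Liu--Miao--Xi is already stated for general Fourier integral operators satisfying the cinematic curvature condition together with the rank-one Hessian hypothesis; it is precisely a variable-coefficient generalisation of the Guth--Wang--Zhang cone result and is cited here as a black box with no phase normalisation. Relatedly, the ``complicated calculations'' mentioned in the introduction are not inside the proof of Proposition \ref{prop 3.2}: they occur after it, when one verifies that the specific operator $F_{j,k}$ with phase $\Psi_j$ satisfies the non-degeneracy, cone, and rank conditions required to invoke Proposition \ref{prop 3.2} (computing $\partial^2_{z\xi}\Psi_j$, the Gauss map, $\partial^2_{\xi\xi}\langle\partial_z\Psi_j,G\rangle$, etc., uniformly in $j$). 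Second, your treatment of $q=\infty$ via a Sobolev embedding in $u$ is both unnecessary and would degrade the exponent: Proposition \ref{prop 3.1} gives no $u$-regularity, only uniformity of the constant, and the embedding would cost an $\epsilon$ that the stated closed inequality $s\geq\frac12+\frac1p+\sigma$ does not tolerate. The correct move is simply to take the supremum over $u\in[1,2]$ of the uniform fixed-time $L^\infty_x$ bound. Apart from these points your identification of which estimate governs each range of $p$ is correct, as is the interpolation step for $1\le p<q<\infty$.
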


\begin{proof}[Proof of Proposition \ref{prop 3.2}.] For the case $q=p\in [1,2]$, since $C_{(u)}\lesssim 1$ for any $u\in[1,2]$ from Seeger, Sogge and Stein \cite{SSS}, it follows that \eqref{eq:3.e} can be obtained by Proposition \ref{prop 3.1}. For the case $q=p\in [4,\infty)$, \eqref{eq:3.e} follows from Gao, Liu, Miao and Xi \cite[Theorem 1.4]{GLMX}, which is a generalization of the local smoothing conjecture in $2+1$ dimensions obtained in Guth, Wang and Zhang \cite[Theorem 1.2]{GWZ}. For the case $q=p\in (2,4)$, by interpolation, we may establish \eqref{eq:3.e}. It is easy to see that \eqref{eq:3.e} also follows from Proposition \ref{prop 3.1} for the case $q=\infty$, since $C_{(u)}\lesssim 1$ for any $u\in[1,2]$. This finishes the proof.
\end{proof}

In order to obtain \eqref{eq:3.6} by the key Proposition \ref{prop 3.2}. We need to verify that the Fourier integral operators $F_{j,k}$ in \eqref{eq:3.5} satisfy all of the conditions stated about $\mathcal{F}$ and the special assumption in Proposition \ref{prop 3.2}:
\begin{enumerate}
 \item[$\circ$] We can directly check that $\textrm{supp}~a_{j,k} \subset \{x\in \mathbb{R}^2:\ |x|\leq 2\varpi\}\times[1/2, 5/2]\times\{\xi\in \mathbb{R}^2:\ |\xi_1|\approx|\xi_2|\approx2^{k}\}$ and the phase function $\Psi_j(x,u,\cdot)$ is a homogeneous function of degree $1$.
 \item[$\circ$] $a_{j,k}(x,u,\xi)$ is a symbol of order $-\frac{1}{2}$. Indeed, by a similar argument in Stein \cite[P. 335, Proposition 3]{Stein} or Sogge \cite[P. 44, Lemma 1.1.2]{So}, one may verify that
\begin{align}\label{eq:3.8}
\left| \int_{\mathbb{R}}  e^{-i\lambda t^2\eta_j(t,t_0) }  t^{2l} \varrho(t,t_0)\,\textrm{d}t\right|\lesssim |\lambda|^{-\frac{1}{2}-l},
\end{align}
where $|\lambda|\geq 1$, $l\in \mathbb{N}_0$ and $\varrho$ is a compactly supported smooth function. Here, we used the uniform properties in Lemma \ref{lemma 3.1}, and leave the details to the reader. Let $I_{j,l}$ be the integral in \eqref{eq:3.8} and
\begin{align*}
\zeta_j(\lambda,t_0):=\int_{\mathbb{R}}  e^{-i\lambda t^2\eta_j(t,t_0) }\psi(t+t_0)\,\textrm{d}t.
\end{align*}
A calculation gives that $\partial_{t_{0}}^{b}\zeta_j$ can be written as a finite sum of $\lambda^lI_{j,l}$ for all $b<N-2$. Then, \eqref{eq:3.8} leads to $|\partial_{t_{0}}^{b}\zeta_j|\lesssim |\lambda|^{-\frac{1}{2}}$ for all $b<N-2$. Furthermore, by the Leibniz rule, we can write $\partial_{\lambda}^{a}\partial_{t_{0}}^{b}\zeta_j$ as a finite sum of $\lambda^{-a}\lambda^lI_{j,l}$ for all $a,b\in \mathbb{N}$ and $b<N-2$. Therefore, for $|\lambda|\geq 1$, by \eqref{eq:3.8}, we deduce that
\begin{align}\label{eq:3.9}
\left| \partial_{\lambda}^{a}\partial_{t_{0}}^{b}\zeta_j(\lambda,t_0)\right|\lesssim |\lambda|^{-\frac{1}{2}-a}
\end{align}
for all $a,b\in \mathbb{N}$ and $b<N-2$. On the other hand, $t_0=(\Gamma'_j)^{-1}(-\frac{\xi_1}{\xi_2})$, $|\xi|\approx 2^{k}\geq 1$ and $\frac{|\xi_1|}{|\xi_2|}\approx 1$, together with $(\textrm{v})$ of Lemma \ref{lemma 3.1} yield $|\partial_{\xi}^{\alpha}t_{0}|\lesssim (1+|\xi|)^{-|\alpha|}$
for all $\alpha\in \mathbb{N}_0^2$ with $|\alpha|<N$. This, combined with \eqref{eq:3.9}, shows that
\begin{align*}
\left|\partial_{u}^{\beta}\partial_{\xi}^{\alpha}m_{j,k}(u,\xi)\right|\lesssim (1+|\xi|)^{-\frac{1}{2}-|\alpha|}
\end{align*}
for all $(\alpha,\beta)\in \mathbb{N}^2_0\times \mathbb{N}_0$ with $|\alpha|+|\beta|<N$. Consequently,
\begin{align*}
\left|\partial_{x}^{\gamma}\partial_{u}^{\beta}\partial_{\xi}^{\alpha}a_{j,k}(x,u,\xi)\right|
\lesssim(1+|\xi|)^{-\frac{1}{2}-|\alpha|}
\end{align*}
for all $(\alpha,\beta,\gamma)\in \mathbb{N}^2_0\times \mathbb{N}_0\times \mathbb{N}^2_0$ with $|\alpha|+|\beta|+|\gamma|<N$, where the implicit constant is independent of $j$ and $k$. Therefore, $a_{j,k}(x,u,\xi)$ is a symbol of order $-\frac{1}{2}$.
 \item[$\circ$] $F_{j,k}$ in \eqref{eq:3.5} satisfies the non-degeneracy condition with $n=2$. Recall that $\Psi_j(z,\xi)=x_1\xi_1+x_2\xi_2-u\xi_1 t_0- u \xi_2 \Gamma_j(t_0)$ and $t_0=(\Gamma'_j)^{-1}(-\frac{\xi_1}{\xi_2})$. Then
\begin{align*}
\partial^2_{z\xi}\Psi_j(z,\xi)=\left(
\begin{array}{ccc}
1~& ~0 ~& ~-t_0 \\
0~& ~1 ~& ~-  \Gamma_j(t_0)
\end{array}
\right).
\end{align*}
It is clear that the rank of the mixed Hessian of $\Psi_j$ is $2$.
  \item[$\circ$] For the cone condition with $n=2$, one observes that $\partial_z \Psi_j(z,\xi)=(\xi_1, \xi_2, -\xi_1 t_0-  \xi_2 \Gamma_j(t_0))$.
Let $G:=(G_1,G_2,G_3)\in \mathbb{S}^2$ be the direction (unique up to sign) satisfying $\nabla_\xi\langle\partial_z\Psi_j(z,\xi),G\rangle=0$, combining $\Gamma'_j(t_0)=-\frac{\xi_1}{\xi_2}$, we then have $G_1-G_3t_0=0$ and $G_2 -G_3\Gamma_j(t_0)=0$, which further leads to $G=G(z,\xi)=(t_0, \Gamma_j(t_0), 1)/[t_0^2+\Gamma_j(t_0)^2+1]^{1/2}$. On the other hand, by calculation, we obtain
\begin{eqnarray*}
\left\{\aligned
&\partial^2_{\xi_1 \xi_1} \langle\partial_z\Psi_j(z,\xi),G(z,\xi_0)\rangle~|_{~\xi=\xi_0}=-G_3(z,\xi_0)\partial_{\xi_1}t_0,\\
&\partial^2_{ \xi_1\xi_2}\langle\partial_z\Psi_j(z,\xi),G(z,\xi_0)\rangle~|_{~\xi=\xi_0}=-G_3(z,\xi_0)\partial_{\xi_2}t_0,\\
&\partial^2_{\xi_2\xi_1}\langle\partial_z\Psi_j(z,\xi),G(z,\xi_0)\rangle~|_{~\xi=\xi_0}= -G_3(z,\xi_0)\Gamma'_j(t_0) \partial_{\xi_1}t_0,\\
&\partial^2_{\xi_2 \xi_2} \langle\partial_z\Psi_j(z,\xi),G(z,\xi_0)\rangle~|_{~\xi=\xi_0}=-G_3(z,\xi_0)\Gamma'_j(t_0) \partial_{\xi_2}t_0.
\endaligned\right.
\end{eqnarray*}
As a consequence, it follows that
\begin{align*}
\partial^2_{\xi\xi}\langle\partial_z\Psi_j(z,\xi),G(z,\xi_0)\rangle~|_{~\xi=\xi_0}=\left(
\begin{array}{ccc}
-G_3(z,\xi_0)\partial_{\xi_1}t_0~& ~-G_3(z,\xi_0)\partial_{\xi_2}t_0 \\
-G_3(z,\xi_0)\Gamma'_j(t_0) \partial_{\xi_1}t_0~& ~-G_3(z,\xi_0)\Gamma'_j(t_0) \partial_{\xi_2}t_0
\end{array}
\right).
\end{align*}
Observe that $\partial_{\xi_1}t_0=-\frac{1}{\xi_2}\frac{1}{\Gamma''_j(t_0)}$ and $\partial_{\xi_2}t_0=\frac{\xi_1}{\xi_2^2}\frac{1}{\Gamma''_j(t_0)}$, one obtains
\begin{align*}
\partial^2_{ \xi_1\xi_2}\langle\partial_z\Psi_j(z,\xi),G(z,\xi_0)\rangle~|_{~\xi=\xi_0}=\partial^2_{\xi_2\xi_1}\langle\partial_z\Psi_j(z,\xi),G(z,\xi_0)\rangle~|_{~\xi=\xi_0},
\end{align*}
which leads to $\textrm{rank}~\partial^2_{\xi\xi}\langle\partial_z\Psi_j(z,\xi),G(z,\xi_0)\rangle~|_{~\xi=\xi_0}=1$. Thus the cone condition is satisfied.
 \item[$\circ$] The special assumption that $\textrm{rank}~\partial^2_{\xi\xi}\Psi_j(z,\xi)=1$ when $\xi\neq 0$ is also satisfied. This part of the proof is similar to the proof of the cone condition with $n=2$. Notice that $\Gamma'_j(t_0)=-\frac{\xi_1}{\xi_2}$, a similar calculation gives
\begin{align*}
\partial^2_{\xi\xi}\Psi_j(z,\xi)=\left(
\begin{array}{ccc}
-u\partial_{\xi_1}t_0~& ~-u \partial_{\xi_2}t_0 \\
-u \Gamma'_j(t_0)\partial_{\xi_1}t_0~& ~-u\Gamma'_j(t_0) \partial_{\xi_2}t_0
\end{array}
\right).
\end{align*}
Then, by the facts that $\partial_{\xi_1}t_0=-\frac{1}{\xi_2}\frac{1}{\Gamma''_j(t_0)}$ and $\partial_{\xi_2}t_0=\frac{\xi_1}{\xi_2^2}\frac{1}{\Gamma''_j(t_0)}$, we obtain that $\textrm{rank}~\partial^2_{\xi\xi}\Psi_j(z,\xi)=1$ when $\xi\neq 0$.
\end{enumerate}

Therefore, $F_{j,k}$ satisfies the conditions in Proposition \ref{prop 3.2} with $\sigma=-\frac{1}{2}$. By $F_{j,k}f=F_{j,k}P_kf$ for any $k\in \mathbb{N}$ and $j\in \mathbb{Z}_0^{-}$, which follows from the definition in \eqref{eq:3.5}, then Proposition \ref{prop 3.2} implies
\begin{align}\label{eq:3.001}
\left\|F_{j,k}f\right\|_{L_{x,u}^4(\mathbb{R}^3)}\lesssim 2^{\left(-\frac{1}{2}+\vartheta_1\right)k}\|f\|_{L_x^{4}(\mathbb{R}^{2})}
\end{align}
for any $\vartheta_1\in(0,\infty)$, and
\begin{align}\label{eq:3.002}
\left\|F_{j,k}f\right\|_{L_{x,u}^{\infty}(\mathbb{R}^3)}\lesssim 2^{k}\|f\|_{L_x^{1}(\mathbb{R}^{2})}.
\end{align}
Using interpolation between \eqref{eq:3.001} and \eqref{eq:3.002}, we obtain that
\begin{align}\label{eq:3.003}
\left\|F_{j,k}f\right\|_{L_{x,u}^6(\mathbb{R}^3)}\lesssim 2^{\vartheta_2 k}\|f\|_{L_x^{2}(\mathbb{R}^{2})}
\end{align}
for any $\vartheta_2\in(0,\infty)$. Furthermore, by interpolation between \eqref{eq:3.001} and \eqref{eq:3.003} with $\vartheta_1:=\vartheta_2:=\frac{1}{2}-\frac{1}{p}>0$, we obtain
\begin{align}\label{eq:3.004}
\left\|F_{j,k}f\right\|_{L_{x,u}^q(\mathbb{R}^3)}\lesssim 2^{-\left(\frac{1}{2}-\frac{1}{p}\right) k}\|f\|_{L_x^{p}(\mathbb{R}^{2})}
\end{align}
for all $(\frac{1}{p}, \frac{1}{q})$ lies in the line segment $(C(\frac{1}{2}, \frac{1}{6}), M(\frac{1}{4}, \frac{1}{4})]$ in Figure \ref{Figure:3}. This is the desired estimate \eqref{eq:3.6} with $\varepsilon:=\frac{1}{2}-\frac{1}{p} >0$. Therefore, we finish the proof of Theorem \ref{thm1}.

\begin{remark} We can also obtain some other endpoint estimates for $F_{j,k}$ on the trapezium $\square$. Indeed, by Proposition \ref{prop 3.2}, we have
\begin{align}\label{eq:3.005}
\left\|F_{j,k}f\right\|_{L_{x,u}^1(\mathbb{R}^3)}\lesssim \|f\|_{L_x^{1}(\mathbb{R}^{2})},
\end{align}
\begin{align}\label{eq:3.006}
\left\|F_{j,k}f\right\|_{L_{x,u}^2(\mathbb{R}^3)}\lesssim 2^{-\frac{1}{2}k}\|f\|_{L_x^{2}(\mathbb{R}^{2})},
\end{align}
and
\begin{align}\label{eq:3.007}
\left\|F_{j,k}f\right\|_{L_{x,u}^{\infty}(\mathbb{R}^3)}\lesssim \|f\|_{L_x^{\infty}(\mathbb{R}^{2})}.
\end{align}
Using interpolation between \eqref{eq:3.002} and \eqref{eq:3.006}, we obtain that
\begin{align}\label{eq:3.008}
\left\|F_{j,k}f\right\|_{L_{x,u}^3(\mathbb{R}^3)}\lesssim \|f\|_{L_x^{3/2}(\mathbb{R}^{2})}.
\end{align}
Therefore, the point $D(\frac{2}{3},\frac{1}{3})$ is also an endpoint of $\widetilde{T}_{j}$, but not $L_x^{3/2}(\mathbb{R}^{2})$ to $L_{x,u}^{3}(\mathbb{R}^2\times [1,2])$ estimate. As in the point $C(\frac{1}{2},\frac{1}{6})$, by interpolation between $D$ with $M$, we can obtain the corresponding estimates on $(D, M]$. This, combined with the estimates on $(C, M]$ and $[O, A]$, leads to the desired estimates \eqref{eq:3.6} except for the case of $[D, A)$. In this paper, we use other methods and obtain the estimate $\widetilde{T}_{j} :\ L_x^{3/2}(\mathbb{R}^{2})\rightarrow L_{x,u}^3(\mathbb{R}^2\times [1,2])$ by Lemma \ref{lemma 2.2}, which implies that our desired estimates can be established on $[D, A)$.
\end{remark}

\section{Proof of Theorem \ref{thm2}}

In this section, we show the necessity of the region of $(\frac{1}{p}, \frac{1}{q})$ in Theorem \ref{thm1}, i.e., Theorem \ref{thm2}. We split the proof into four parts.

\textbf{Proof of (i) of Theorem \ref{thm2}}.

Since $\omega=\inf_{\tau\in (0,1)}\sup_{t\in (0,\tau)}\frac{\ln|\gamma(t)|}{\ln t}$, then, for any $\epsilon>0$, there exists $\tau\in (0,1)$ such that $\sup_{t\in (0,\tau)}\frac{\ln|\gamma(t)|}{\ln t}\in [\omega,\omega+\epsilon)$. Moreover, there exists $t\in (0,\tau)$ such that $\frac{\ln|\gamma(t)|}{\ln t}\in (\omega-\epsilon,\omega+\epsilon)$, which further implies that
      $$\sup_{s\in (0,t)}\frac{\ln|\gamma(s)|}{\ln s}\leq\sup_{s\in (0,\tau)}\frac{\ln|\gamma(s)|}{\ln s}< \omega+\epsilon ~~\textrm{and}~~ \sup_{s\in (0,t)}\frac{\ln|\gamma(s)|}{\ln s}\geq \omega. $$
      Hence, there exists $t'\in (0,t)$ such that $\frac{\ln|\gamma(t')|}{\ln t'}\in (\omega-\epsilon,\omega+\epsilon)$. Repeating the same procedure, there exists a sequence $\{t_i\}_{i=1}^{\infty}\subset (0,1)$ such that $\frac{\ln|\gamma(t_i)|}{\ln t_i}\in (\omega-\epsilon,\omega+\epsilon)$, where $\{t_i\}_{i=1}^{\infty}$ is strictly decreasing and $\lim_{i\rightarrow\infty}t_i=0$ . Then we clearly have
      \begin{align}\label{eq:3.1a}
      \left|\gamma(t_i)\right|={t_i}^{\log_{t_i} {|\gamma(t_i)|}}={t_i}^{\frac{\ln |\gamma(t_i)|}{\ln t_i}}\in \left({t_i}^{\omega+\epsilon}, {t_i}^{\omega-\epsilon}\right).
      \end{align}

      On the other hand, we claim that
      \begin{align}\label{eq:3.1b}
      t_i\left(t_i\left|\gamma(t_i)\right|\right)^{\frac{1}{q}-\frac{1}{p}}\lesssim 1,
      \end{align}
      where the implicit constant is independent of $i$. Indeed, let $S_{t_i}:=[-t_i, t_i]\times[-|\gamma(t_i)|, |\gamma(t_i)|]$ and $$\Lambda_{\chi_{S_{t_i}}}(x,u):=\left|\left\{t\in [0,1]:\ (x_1-ut,x_2-u\gamma(t))\in S_{t_i}\right\}\right|.$$
       Then, for any $x\in S_{t_i}/2$, by the doubling condition \eqref{eq:02.2}, there exists $\delta\in (0,1)$ small enough such that, for any $t\in [0,\delta t_i]$, we have $|x_1-ut|\leq t_i/2+2\delta t_i\leq t_i$ and $|x_2-u\gamma(t)|\leq |\gamma(t_i)|/2+2|\gamma(\delta t_i)|\leq |\gamma(t_i)|$, which further leads to the pointwise estimate $\Lambda_{\chi_{S_{t_i}}}(x,u)\geq \delta t_i \chi_{S_{t_i}/2}(x)$. Noticing that $T(\chi_{S_{t_i}})(x,u)\geq \Lambda_{\chi_{S_{t_i}}}(x,u) $ and the assumption that $\|Tf\|_{L_{x,u}^q(\mathbb{R}^2\times [1,2])} \lesssim \|f\|_{L_x^{p}(\mathbb{R}^{2})}$, we conclude that
       \begin{align*}
      |S_{t_i}|\lesssim \left|\left\{(x,u)\in\mathbb{R}^2\times[1,2]:\ T(\chi_{S_{t_i}})(x,u)\geq \delta t_i \right\}\right|\lesssim \left(t_i^{-1} \left\|\chi_{S_{t_i}}\right\|_{L_x^{p}(\mathbb{R}^{2})} \right)^{q}.
      \end{align*}
Combining this with the fact that $|S_{t_i}|\approx t_i |\gamma(t_i)|$ and $\|\chi_{S_{t_i}}\|_{L_x^{p}(\mathbb{R}^{2})}\approx (t_i |\gamma(t_i)|)^{\frac{1}{p}}$, we obtain \eqref{eq:3.1b}.

A theorem of $\textrm{H}\ddot{\textrm{o}}\textrm{rmander}$ \cite{Ho} implies that $\frac{1}{q}\leq \frac{1}{p}$. Combining \eqref{eq:3.1a} and \eqref{eq:3.1b}, we obtain
      \begin{align*}
      1\gtrsim t_i\left(t_i\left|\gamma(t_i)\right|\right)^{\frac{1}{q}-\frac{1}{p}} \gtrsim t_i\left(t_i\left({t_i}^{\omega-\epsilon}\right)\right)^{\frac{1}{q}-\frac{1}{p}}={t_i}^{1+(1 +\omega-\epsilon)\left(\frac{1}{q}-\frac{1}{p}\right)}.
      \end{align*}
      Notice that the sequence $\{t_i\}_{i=1}^{\infty}\subset (0,1)$ is strictly decreasing and $\lim_{i\rightarrow\infty}t_i=0$, so that
      \begin{align}\label{eq:3.1c}
      1+(1 +\omega-\epsilon)\left(\frac{1}{q}-\frac{1}{p}\right)\geq 0
      \end{align}
      for all $\epsilon>0$. Let $\epsilon\rightarrow0$ in \eqref{eq:3.1c}, it follows that $1+(1 +\omega)(\frac{1}{q}-\frac{1}{p})\geq 0$, and so we have (i) of Theorem \ref{thm2}.

\textbf{Proof of (ii) of Theorem \ref{thm2}}.

As above, the condition $\frac{1}{q}\leq\frac{1}{p}$ follows from a theorem of $\textrm{H}\ddot{\textrm{o}}\textrm{rmander}$ \cite{Ho}. Hence, we should only to show that $\frac{2}{p}-1\leq\frac{1}{q}$. For any $\epsilon>0$, let $D$ be the $\epsilon$ neighborhood of the curve $(ut,u\gamma(t))$ with $t\in (0,1]$ and $u\in [1,2]$. Then, for any $x\in D$, there exists $t_0\in [0,1]$ such that $|(x_1-ut_0,x_2-u\gamma(t_0))|\leq \epsilon$. Therefore, for any $t\in (t_0-\epsilon, t_0+\epsilon)$, there exists $\delta\in (0,\infty)$ such that $|(x_1-ut,x_2-u\gamma(t))|\leq \delta \epsilon$. Here, we used the triangle inequality and the estimate that $|\gamma(t)-\gamma(t_0)|\lesssim |t-t_0|\lesssim \epsilon$. Furthermore, we have $T(\chi_{B(0,\delta \epsilon)})(x,u)\gtrsim \epsilon \chi_{D}(x) $. As an immediate consequence of  $\|Tf\|_{L_{x,u}^q(\mathbb{R}^2\times [1,2])} \lesssim \|f\|_{L_x^{p}(\mathbb{R}^{2})}$, we obtain
       \begin{align*}
      |D|\lesssim \left|\left\{(x,u)\in\mathbb{R}^2\times[1,2]:\ T(\chi_{B(0,\delta \epsilon)})(x,u)\gtrsim \epsilon \right\}\right|\lesssim \left(\epsilon^{-1} \left\|\chi_{B(0,\delta \epsilon)}\right\|_{L_x^{p}(\mathbb{R}^{2})} \right)^{q}.
      \end{align*}
It is easy to check that $|D|\approx \epsilon$ and $\|\chi_{B(0,\delta \epsilon)}\|_{L_x^{p}(\mathbb{R}^{2})}\approx \epsilon^{\frac{2}{p}}$. This readily implies that $\epsilon^{\frac{1}{q}-\frac{2}{p}+1}\lesssim 1$ holds for all $\epsilon>0$. Thus, via letting $\epsilon\rightarrow0$ we obtain that $\frac{2}{p}-1\leq \frac{1}{q}$ and, hence, we have (ii) of Theorem \ref{thm2}.

\textbf{Proof of (iii) of Theorem \ref{thm2}}.

A simple calculation gives that the adjoint operator $T^*$ of $T$ can be written as
\begin{align*}
T^*g(x):=\int_1^2\int_0^1g(x_1+ut,x_2+u\gamma(t),u)\,\textrm{d}t \,\textrm{d}u,
\end{align*}
and the boundedness $\|Tf\|_{L_{x,u}^q(\mathbb{R}^2\times [1,2])} \lesssim \|f\|_{L_x^{p}(\mathbb{R}^{2})}$ is equivalent to
\begin{align}\label{eq:4.4}
\left\|T^*g\right\|_{L_{x}^{p'}(\mathbb{R}^2)} \lesssim \|g\|_{L_{x,u}^{q'}(\mathbb{R}^{2}\times [1,2])}.
\end{align}
For any $\epsilon>0$, let $L$ be the $\epsilon$ neighborhood of the curve $(t,\gamma(t))$ with $t\in (0,1]$. Then, for any $x\in L$, there exists $t_0\in [0,1]$ such that $|(x_1-t_0,x_2-\gamma(t_0))|\leq \epsilon$. Therefore, for any $-x\in L$, $t\in (t_0-\epsilon, t_0+\epsilon)$ and $u\in [1, 1+\epsilon]$, there exists $\delta\in (0,\infty)$ such that $|(x_1+ut,x_2+u\gamma(t))|\leq  \delta\epsilon$. Here, we used the triangle inequality and the estimates that $|\gamma(t)-\gamma(t_0)|\lesssim |t-t_0|\lesssim \epsilon$ and $|\gamma(t_0)|\lesssim 1$. As a result, by setting $W:=B(0,\delta\epsilon)\times [1,1+\epsilon]$, for $-x\in L$, we have
\begin{align*}
T^*\chi_{W}(x)\geq\int_1^{1+\epsilon}\int_{t_0-\epsilon}^{t_0+\epsilon}\chi_{W}(x_1+ut,x_2+u\gamma(t),u)\,\textrm{d}t \,\textrm{d}u\gtrsim \epsilon^2,
\end{align*}
i.e., we have $T^*\chi_{W}(x)\gtrsim \epsilon^2 \chi_{L}(-x) $. According to \eqref{eq:4.4}, we assert that
       \begin{align*}
      |L|\lesssim \left|\left\{x\in\mathbb{R}^2:\ T^*\chi_{W}(x)\gtrsim \epsilon^2 \right\}\right|\lesssim \left(\epsilon^{-2} \left\|\chi_{W}\right\|_{L_{x,u}^{q'}(\mathbb{R}^{2}\times[1,2])} \right)^{p'}.
      \end{align*}
This, combined with the fact that $|L|\approx \epsilon$ and $\|\chi_{W}\|_{L_{x,u}^{q'}(\mathbb{R}^{2}\times[1,2])}\approx \epsilon^{\frac{3}{q'}}$, implies that $\epsilon^{\frac{3}{q}-\frac{1}{p}}\lesssim 1$ holds for all $\epsilon>0$. Then, we get $\frac{1}{q}\geq \frac{1}{3p}$ by letting $\epsilon\rightarrow0$, which is (iii) of Theorem \ref{thm2}.

\textbf{Proof of (iv) of Theorem \ref{thm2}}.

It is easy to see that (i) implies (iv) of Theorem \ref{thm2}, when $\omega\in [2,\infty)$. Furthermore, from (ii) and (iii) of Theorem \ref{thm2}, the region of $(\frac{1}{p}, \frac{1}{q})$ in Theorem \ref{thm1} remains unchanged, when $\omega\in (0,\frac{3}{2})$. On the other hand, the points $D(\frac{2}{3},\frac{1}{3})$ and $C(\frac{1}{2},\frac{1}{6})$ are the endpoints in the proof. Therefore, we may conjecture that the region of $(\frac{1}{p}, \frac{1}{q})$ in Theorem \ref{thm1} also remains unchanged for all $\omega\in (0,2)$, which further implies that we can expect (iv) of Theorem \ref{thm2}. Therefore, the $\omega=2$ is a critical point, and we may need a Taylor's expansion of order $2$ as in \eqref{eq:4.5}.

We now give the proof of (iv) of Theorem \ref{thm2}. From the condition (i) of Theorem \ref{thm1}, there exists $\tau_0\in (0,1]$ such that $\gamma'(\tau_0)\neq \gamma(\tau_0)$. One may assume without loss of generality that $\tau_0=1$, $\gamma(t)$ is increasing on $(0,1]$ and $\gamma(1)=1$. Then $\gamma'(1)\neq1$. Let $e_1:=(-1, -\gamma'(1))/ \sqrt{1+\gamma'(1)^2}$ and $e_2:=(-\gamma'(1), 1)/ \sqrt{1+\gamma'(1)^2}$ be two orthogonal unit vectors. For any $\epsilon>0$, set
      $$ Q:=\left\{x\in \mathbb{R}^{2}:\ |x\cdot e_1|\leq 6 \epsilon ~~\textrm{and}~~ |x\cdot e_2|\leq [3+2|\gamma''(1)|] \epsilon^2\right\}.$$
      We choose a subset $\{u_i\}\subset [1,2]$ such that $|u_{i+1}-u_i|=2\epsilon^2$ for all $i$. For each $u_i$, denote
      $$\Omega_{u_i}:=\left\{x\in \mathbb{R}^{2}:\ |\left(x-(u_i,u_i)\right)\cdot e_1|\leq \epsilon ~\textrm{and}~ |\left(x-(u_i,u_i)\right)\cdot e_2|\leq \epsilon^2 \right\}.$$
      For any $x\in \Omega_{u_i}$ and $t\in[1-\epsilon/\sqrt{1+\gamma'(1)^2}, 1]$, Taylor's expansion yields
      \begin{align}\label{eq:4.5}
      \gamma(t)=1+\gamma'(1)(t-1)+\frac{\gamma''(1)}{2}(t-1)^2+o\left((t-1)^2\right),
      \end{align}
      where $o((t-1)^2)$ means $o((t-1)^2)/(t-1)^2\rightarrow 0$ as $t\rightarrow 1^-$. Furthermore, by \eqref{eq:4.5}, a simple calculation gives $|((u_it,u_i\gamma(t))-(u_i,u_i))\cdot e_1|\leq 4 \epsilon$ and $|((u_it,u_i\gamma(t))-(u_i,u_i))\cdot e_2|\leq 2|\gamma''(1)|\epsilon^2$ for $\epsilon>0$ small enough. By the triangle inequality, we may conclude that $|(x-(u_it,u_i\gamma(t)))\cdot e_1|\leq 5\epsilon$ and $|(x-(u_it,u_i\gamma(t)))\cdot e_2|\leq [1+2|\gamma''(1)|] \epsilon^2$. Therefore, for any $u\in (u_i-\epsilon^2, u_i+\epsilon^2)$, the triangle inequality also gives $|(x-(ut,u\gamma(t)))\cdot e_1|\leq 6\epsilon$ and $|(x-(ut,u\gamma(t)))\cdot e_2|\leq [3+2|\gamma''(1)|] \epsilon^2$, where $\epsilon>0$ small enough.

      The above argument implies that $(x_1-ut, x_2-u\gamma(t))\in Q$ if $x\in \Omega_{u_i}$, $u\in (u_i-\epsilon^2, u_i+\epsilon^2)$ and $t\in[1-\epsilon/\sqrt{1+\gamma'(1)^2}, 1]$. Consequently, for $\epsilon>0$ small enough, one may obtain
      \begin{align*}
     T\chi_Q(x,u)\geq \int_{1-\frac{\epsilon}{\sqrt{1+\gamma'(1)^2}}}^1\chi_{Q}(x_1-ut, x_2-u\gamma(t))\,\textrm{d}t= \frac{\epsilon}{\sqrt{1+\gamma'(1)^2}}
      \end{align*}
      for any $x\in \Omega_{u_i}$ and $u\in (u_i-\epsilon^2, u_i+\epsilon^2)$. By $\gamma'(1)\neq1$, we have $\Omega_{u_i}\cap \Omega_{u_{i'}}=\emptyset$ for any $i\neq i'$. Notice that the number of the elements in $\{u_i\}\subset [1,2]$ is equivalent to $\epsilon^{-2}$. Then clearly
       \begin{align*}
     \left\|T\chi_Q\right\|_{L_{x,u}^{q}(\mathbb{R}^{2}\times [1,2])} \geq \left(\sum_i \int_{u_i-\epsilon^2}^{u_i+\epsilon^2}\int_{\Omega_{u_i}} \left|T\chi_Q\right|^q \,\textrm{d}x\,\textrm{d}u\right)^{\frac{1}{q}}\gtrsim \epsilon^{1+\frac{3}{q}}.
      \end{align*}
This, combined with $\|\chi_Q\|_{L^{p}(\mathbb{R}^{2})}\approx \epsilon^{\frac{3}{p}}$ and $\|Tf\|_{L_{x,u}^q(\mathbb{R}^2\times [1,2])} \lesssim \|f\|_{L_x^{p}(\mathbb{R}^{2})}$, leads to that $\epsilon^{1+\frac{3}{q}-\frac{3}{p}}\lesssim 1$ holds for all $\epsilon>0$ small enough. Let $\epsilon\rightarrow0$, it follows that $\frac{1}{q}\geq\frac{1}{p}-\frac{1}{3}$. Thus (iv) of Theorem \ref{thm2} is proved.

Therefore, putting things together we finish the proof of Theorem \ref{thm2}.

\bigskip

\noindent  Junfeng Li

\smallskip

\noindent  School of Mathematical Sciences, Dalian University of Technology, Dalian, 116024,  People's Republic of China

\smallskip

\noindent {\it E-mail}: \texttt{junfengli@dlut.edu.cn}

\bigskip

\noindent  Naijia Liu

\smallskip

\noindent  School of Mathematics, Sun Yat-sen University, Guangzhou, 510275,  People's Republic of China

\smallskip

\noindent {\it E-mail}: \texttt{liunj@mail2.sysu.edu.cn}

\bigskip

\noindent Zengjian Lou and Haixia Yu (Corresponding author)

\smallskip

\noindent  Department of Mathematics, Shantou University, Shantou, 515821, People's Republic of China

\smallskip

\noindent {\it E-mails}: \texttt{zjlou@stu.edu.cn} (Z. Lou)

\noindent\phantom{{\it E-mails:}} \texttt{hxyu@stu.edu.cn} (H. Yu)

\end{document}